\documentclass[11pt,a4paper]{article}
\usepackage[utf8]{inputenc}
\usepackage{amsmath}
\usepackage{amsthm}
\usepackage{amsfonts}
\usepackage{amssymb}
\usepackage{graphicx}
\usepackage{subfigure}
\usepackage{tikz}
\usetikzlibrary{calc,arrows,babel,positioning,shapes,patterns,decorations.markings}

\newtheorem{definition}{Definition}
\newtheorem{theorem}{Theorem}
\newtheorem{lemma}[theorem]{Lemma}

\newtheorem{corollary}[theorem]{Corollary}

\usepackage{authblk}

\providecommand{\keywords}[1]
{
  \small	
  \textbf{\textit{Keywords:}} #1
}

\title{Euler dynamic H-trails in edge-colored graphs\thanks{This research was supported by grants CONACYT FORDECYT-PRONACES/ 39570/2020 and UNAM DGAPA-PAPIIT IN102320. The second author is a doctoral student from Posgrado en Ciencias Matemáticas, Universidad Nacional Autónoma de México (UNAM) and received a fellowship 782239 from CONACYT.} }
\author[1]{Galeana-Sánchez Hortensia}
\author[1]{Vilchis-Alfaro Carlos\thanks{Corresponding author. Email: vilchiscarlos@ciencias.unam.mx}}
\affil[1]{\normalsize Instituto de Matem\'{a}ticas, Universidad Nacional Aut\'{o}noma de M\'{e}xico, Área de la investigación científica, Circuito Exterior, Ciudad Universitaria, 04510 Coyoacán, CDMX, México}
\date{}

\usepackage[left=23mm,right=23mm,top=25mm,bottom=25mm]{geometry}

\begin{document}
\maketitle

\begin{abstract}
Alternating Euler trails has been extensively studied for its diverse practical and theorical applications. Let H be a graph possibly with loops and G a graph without loops.  In this paper we deal with any fixed coloration of E(G) with V(H) (H-coloring of G). A sequence $W=(v_0,e_0^1,\ldots,e_0^{k_0},v_1,e_1^1,\ldots,e_{n-1}^{k_{n-1}},v_n)$ in $G$, where for each $i\in\{0,\ldots,n-1\}$, $k_i\geq 1$ and $e_i^j=v_iv_{i+1}$ is an edge in $G$, for every $j\in\{1,\ldots, k_i\}$, is a dynamic $H$-trail if $W$ does not repeat edges and $c(e_i^{k_i})c(e_{i+1}^1)$ is an edge in $H$, for each $i\in\{0,\ldots,n-2\}$. In particular a dynamic $H$-trail is an alternating Euler trail when $H$ is a complete graph without loops and $k_i=1$, for every $i\in\{1,\ldots,n-1\}$.

In this paper, we introduce the concept of dynamic $H$-trail, which arises in a natural way in the modeling of many practical problems, in particular, in theoretical computer science. 

We provide necessary and sufficient conditions for the existence of closed Euler dynamic $H$-trail in $H$-colored multigraphs. Also we provide polynomial time algorithms that allows us to convert a cycle in an auxiliary graph, $L_2^H(G)$, in a closed dynamic H-trail in $G$, and vice versa, where $L_2^H(G)$ is a non-colored simple graph obtained from $G$ in a polynomial time.
\end{abstract}
\keywords{Edge colored graph, Dynamic H-trail, Euler dynamic H-trails,
Hamiltonian cycles}

\section{Introduction}
For basic concepts, terminology and notation not defined here, we refer the reader to \cite{bang} and \cite{bondy}. Throughout this work, we will consider graphs, multigraphs (graphs allowing parallel edges) and simple graphs (graphs with no parallel edges nor loops). Let $G$ be a multigraph, $V(G)$ and $E(G)$ will denoted the sets of vertices and edges of $G$, respectively.

A \textit{spanning circuit} in a graph $G$ is defined as a closed trail that contains each vertex of $G$. We will say that a graph is \textit{supereulerian} if it contains a spanning circuit. Let $T$ be a spanning circuit in $G$: if $T$ visits each vertex of $G$ exactly once, then $T$ will be called \textit{Hamilton cycle}; and if $T$ visits each edge of $G$, then $T$ will be called \textit{Euler trail}. We will say that a graph is \textit{hamiltonian} if it has a Hamilton cycle and  \textit{eulerian} if it contains an Euler trail.

In \cite{supereulerian}, Harary and Nash-Williams introduced the following graphs. Let $G$ be a graph with $p$ vertices and $q$ edges, for $n \geq 2$, $L_n(G)$ will denote the graph with $nq$ vertices that are obtained as follows: for each edge $e=uv$ of $G$, we take two vertices $f(u,e)$ and $f(v,e)$ in $L_n(G)$ and adding a path with $n-2$ new intermediate vertices connecting $f(u,e)$ and $f(v,e)$; finally, for each vertex $u$ of $G$, we add an edge joining $f(u,e)$ and $f(u,g)$, whenever $e$ and $g$ are distinct edges with end point $u$. They also proved the followings relationships between eulerian graphs and hamiltonian cycles of $L_n(G)$.

\begin{theorem}[\cite{supereulerian}]
Let $G$ be a graph. The following assertions holds:
\begin{enumerate}
\item If $G$ is eulerian, then $L_n(G)$ is hamiltonian, for every $n \geq 2$.
\item If $L_n(G)$ is hamiltonian, for some $n \geq 3$, then $G$ is eulerian.
\item $G$ is superulerian if and only if $L_2(G)$ is hamiltonian.
\end{enumerate}
\end{theorem}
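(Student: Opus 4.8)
The plan is to build a two-way dictionary between closed trails in $G$ and cycles in $L_n(G)$ that exploits the two kinds of edges of $L_n(G)$: the \emph{edge-paths} $f(u,e),w_1,\dots,w_{n-2},f(v,e)$ associated with each edge $e=uv$ of $G$, and the \emph{vertex-cliques} induced by $\{f(u,e):e\ni u\}$ at each vertex $u$. The guiding principle is that traversing an edge $e=uv$ in $G$ should correspond to walking along the whole edge-path of $e$, while passing through a vertex $u$ (entering along $e$, leaving along $g$) should correspond to using the single clique-edge $f(u,e)f(u,g)$. Throughout I assume $G$ has no isolated vertices, as is implicit in the eulerian/supereulerian hypotheses.

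For (1) I would take a closed Euler tour of $G$ and \emph{lift} it. Following the tour, each traversal of an edge $e=uv$ becomes a walk along the entire edge-path of $e$, and each passage through a vertex $u$ between consecutive edges $e,g$ is realized by the clique-edge $f(u,e)f(u,g)$. Since the tour uses every edge exactly once, every edge-path is traversed once, covering all its internal vertices; and since the tour pairs the edges at $u$ into $\deg_G(u)/2$ consecutive pairs, the clique-edges used at $u$ form a perfect matching of the clique, so each $f(u,e)$ is visited exactly once. The resulting subgraph is $2$-regular (every endpoint vertex gets one path-edge and one clique-edge; every internal vertex keeps its two path-edges) and connected because the tour is a single closed walk; hence it is a Hamilton cycle.

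For (2), assume $n\ge 3$ and let $C$ be a Hamilton cycle of $L_n(G)$. The structural key is that an internal vertex of an edge-path lies on no clique, so it has degree $2$ and $C$ must use both its edges; as $n-2\ge 1$, this forces $C$ to traverse every edge-path in full. Consequently each $f(u,e)$ uses its path-edge together with exactly one clique-edge, so at every vertex $u$ the clique-edges of $C$ form a perfect matching of $\{f(u,e):e\ni u\}$, which requires $\deg_G(u)$ to be even. Projecting $C$ back (sending each fully traversed edge-path to its edge and contracting cliques) gives a single closed trail using every edge of $G$, so $G$ is connected and even, i.e.\ eulerian.

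The case $n=2$ of (3) is where the real work lies, since now the edge-paths are single edges with no internal vertices, so nothing forces a Hamilton cycle to use them and an edge of $G$ may be skipped. For the forward direction I would lift a spanning closed trail $T$ as in (1), obtaining a cycle through all vertices $f(u,e)$ with $e\in E(T)$, and then \emph{absorb} the leftovers: for each $e=uv\notin E(T)$ the vertices $f(u,e)$ and $f(v,e)$ are spliced independently into existing clique-edges at $u$ and at $v$ (replacing a clique-edge $f(u,f)f(u,g)$ by the clique-path $f(u,f)f(u,e)f(u,g)$), which is possible because $T$ meets every vertex and the cliques are complete, and crucially the edge $f(u,e)f(v,e)$ is never used. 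For the converse, given a Hamilton cycle $C$ let $E_C$ be the set of edges of $G$ whose edge is used by $C$; a count inside each clique shows the used endpoints are exactly the ends of the clique-paths of $C$, hence even in number at every vertex, so $E_C$ induces an even subgraph, while connectedness and spanning follow from $C$ being a single cycle covering all vertices $f(u,e)$ (if some vertex had no used edge, its clique would split off as separate cycles of $C$). An Eulerian circuit of the even connected spanning subgraph $E_C$ is then the desired spanning circuit. I expect this $n=2$ bookkeeping to be the main obstacle: checking that every leftover vertex can be absorbed without spawning extra components, and that $E_C$ is connected and spans $V(G)$.
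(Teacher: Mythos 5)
Your proof is correct, but note that the paper itself never proves this statement: it is quoted as background from Harary and Nash-Williams \cite{supereulerian}. The meaningful comparison is therefore with the paper's proof of its $H$-colored generalization, Theorem \ref{theo:3}, which is assembled from the bijection of Theorem \ref{theo:Bijection} (built in Lemmas \ref{lemma:H-trail to cycle} and \ref{lemma:Cycle to H-trail}) between closed dynamic $H$-trails in $G$ and cycles of $L_2^H(G)$ alternating with the joint matching $M_J$, together with the matching characterization of Theorem \ref{theo:2}. Your direct lift/project dictionary is the uncolored shadow of exactly that machinery: your observation that a Hamilton cycle must use, at each $f(u,e)$, the path-edge together with exactly one clique-edge is the paper's $M_J$-alternation, and your degree-two argument forcing full traversal of each subdivided edge-path when $n\geq 3$ is the same contraction step used in part b of the paper's proof of Theorem \ref{theo:3}. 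What your route buys is self-containedness and, above all, part (3), which has no analogue in the paper: your absorption of the leftover vertices $f(u,e)$ with $e\notin E(T)$ works because $\{f(u,e) : e \textrm{ incident with } u\}$ induces a complete graph in $L_2(G)$, and this completeness is precisely what the $H$-colored setting loses (there $G_u$ need not be complete), which is why Theorem \ref{theo:3} asserts no $n=2$ equivalence and Figure \ref{fig:Conter} gives a counterexample. Two spots deserve tightening. First, in the forward direction of (3), if several edges at $u$ are missed by $T$, you cannot splice them all ``independently into existing clique-edges'': the lifted cycle may have fewer clique-edges at $u$ than there are leftover vertices (e.g.\ when $T$ visits $u$ only once); splice them successively, each into an edge created by the previous splice, or insert them all at once as a single clique-path. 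Second, in the converse of (3), your count of ``ends of clique-paths'' tacitly assumes the edges of $C$ inside a clique decompose into paths; you should say explicitly that a cycle component of this restriction would have to be all of $C$, which is impossible since $C$ is Hamiltonian and the cliques at the other endpoints of the corresponding edges are nonempty.
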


An \textit{edge-coloring} of a graph $G$ is defined as a mapping $c : E(G) \rightarrow \mathbb{N}$, where $\mathbb{N}$ is the set of natural numbers. We will say that $G$ is an \textit{edge-colored graph} whenever we are taking a fixed edge-coloring of $G$. A \textit{properly colored spanning circuit} is a spanning circuit with no consecutive edges having the same color, including the first and the last in a closed spanning circuit. An edge-colored graph is \textit{supereulerian}, if $G$ contains a properly colored spanning circuit. Several authors have worked with this concept, for example, Bang-Jensen, Bellitto and Yeo \cite{PC-supereulerian}; Guo, Li, Li and Zhang \cite{PC-supereulerian2}; Guo, Broersma, Li and Zhang \cite{PC-supereulerian3}. Properly colored walks are of interest as a generalization of walk in undirected and directed graphs, see \cite{bang}, as well as, in graph theory application, for example, in genetic and molecular biology \cite{dorninger,dorninger2,a4}, social science \cite{cs}, channel assignment is wireless networks \cite{raw,raw2}. For more details on properly colored walks, we refer the reader to Chapter 16 of \cite{bang}.

Kotzig \cite{kotzig} gave the following characterization of edge-colored multigraphs containing properly colored closed Euler trail.

\begin{theorem}[Kotzig \cite{kotzig}]\label{theorem:Kotzig}
Let $G$ be an edge-colored eulerian multigraph. Then $G$ has a properly colored closed Euler trail if and only if $\delta_i(x) \leq \sum_{j\neq i} \delta_j (x)$, where $\delta_i(x)$ is the number of edges with color $i$ incident with $x$, for each vertex $x$ of $G$.
\end{theorem}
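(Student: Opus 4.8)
The plan is to prove both directions through the language of \emph{transition systems}: at each vertex $x$, a closed Euler trail induces a perfect matching (a pairing) of the $d(x)$ edges incident with $x$, where two edges are matched precisely when they are consecutive in the trail at $x$; the trail is properly colored exactly when every such matched pair is \emph{bichromatic} (its two edges receive distinct colors), counting also the pair formed by the first and last edge. Since $G$ is eulerian, $d(x)$ is even for every vertex $x$, so these matchings are well defined, and I will take $G$ loopless as in the paper's convention.

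For \textbf{necessity}, I would fix a vertex $x$ and a color $i$ and examine the pairing induced at $x$ by a given properly colored closed Euler trail. Each edge incident with $x$ lies in exactly one matched pair, and every pair is bichromatic, so each of the $\delta_i(x)$ edges of color $i$ is matched to a distinct edge whose color differs from $i$. This yields an injection from the color-$i$ edges at $x$ into the edges of other colors at $x$, which is precisely the inequality $\delta_i(x)\le\sum_{j\ne i}\delta_j(x)$.

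For \textbf{sufficiency}, I would argue in two stages. First, a purely local observation: for a set of $d$ (even) objects colored so that no color occurs more than $d/2$ times---which is exactly the hypothesis $\delta_i(x)\le d(x)-\delta_i(x)$ for all $i$---one can always pair the objects into $d/2$ bichromatic pairs (list the edges so that equal colors are consecutive and match the $t$-th with the $(t+d/2)$-th; the bound on each color block guarantees no pair is monochromatic). Choosing such a pairing at every vertex determines a transition system, and following transitions decomposes $E(G)$ into a family of \emph{properly colored} closed trails. The second stage merges this family into a single trail. Because $G$ is connected, if at least two trails remain there exist two of them, $T_1$ and $T_2$, sharing a vertex $x$; pick a transition pair $\{a,b\}$ of $T_1$ at $x$ and a transition pair $\{c,d\}$ of $T_2$ at $x$. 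Breaking both pairs and re-pairing them crosswise splices $T_1$ and $T_2$ into one closed trail, and I claim that one of the two crosswise re-pairings, $\{a,c\},\{b,d\}$ or $\{a,d\},\{b,c\}$, is again bichromatic on both pairs. This follows from a short case analysis using only $c(a)\ne c(b)$ and $c(c)\ne c(d)$: supposing both re-pairings fail forces one of the equalities $c(a)=c(b)$ or $c(c)=c(d)$, a contradiction. Each merge lowers the number of trails by one, so after finitely many steps a single properly colored closed Euler trail remains.

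The routine parts---the injection in the necessity direction and the rotate-by-half local pairing---are straightforward. The step I expect to be the crux is the global stitching: guaranteeing that the crosswise re-pairing merging two trails can always be chosen bichromatic (the small case analysis above) and that repeated merging, justified by connectivity, terminates with exactly one trail. This local re-pairing lemma is what makes the balance condition, which only controls each vertex in isolation, strong enough to force a single globally proper closed Euler trail.
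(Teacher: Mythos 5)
Your proof is correct, and its two-phase skeleton (decompose $E(G)$ into properly colored closed trails via bichromatic pairings at each vertex, then splice the trails together using connectivity) is the same strategy that underlies the paper's treatment; but the way you realize each phase is genuinely different and considerably more elementary. The paper proves this statement as a corollary of its general $H$-coloring machinery: it takes $H$ to be a complete loopless graph, observes that each local graph $G_u$ is then a complete multipartite graph whose parts are the color classes at $u$, and invokes Tutte's theorem in both directions --- the degree condition is shown to be exactly Tutte's condition for $G_u$, so each $G_u$ has a perfect matching, which via the $L_2^H(G)$ joint-matching translation (Theorem \ref{theo:2} and Corollary \ref{cor:1}) yields a partition of $E(G)$ into closed $H$-trails, and finally the merging argument of Theorem \ref{theo:5} produces a single Euler trail. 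You bypass all of this: in the necessity direction you read the inequality directly off the pairing induced by the trail (the paper instead passes through the easy direction of Tutte's theorem applied to $G_u$), and in the sufficiency direction you replace Tutte's theorem by the explicit sort-and-shift-by-$d(x)/2$ pairing and replace the $L_2^H(G)$ apparatus by direct transition-system reasoning. Your crosswise re-pairing lemma, with its four-case analysis from $c(a)\ne c(b)$ and $c(c)\ne c(d)$, is the same combinatorial core as the merging step inside the paper's proof of Theorem \ref{theo:5}, where the complete multipartite structure of $G_{x_0}$ forces one of the two concatenations $P_1\cup P_2$ or $P_1\cup P_2^{-1}$ to be admissible. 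What each route buys: the paper's detour through $L_2^H(G)$, matchings and Tutte is precisely what generalizes to arbitrary $H$ and to dynamic $H$-trails, so Kotzig's theorem drops out as a special case of a much broader result; your argument is self-contained, needs no external matching theorem, and is essentially an algorithm for constructing the properly colored Euler trail.
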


Different kinds of edge-coloring in undirected and directed graphs have been studied, for example, in \cite{Linek} the arcs of a tournament were colored with the vertices of a poset. In this paper we will consider the following edge-coloring. Let $H$ be a graph possibly with loops and $G$ be a graph without loops. An \textit{$H$-coloring} of $G$ is a function $c: E(G) \rightarrow V(H)$. We will say that $G$ is an $H$\textit{-colored graph}, whenever we are taking a fixed $H$-coloring of $G$. A walk $W=(v_0,e_0,v_1,e_1,\ldots,e_{k-1},v_k)$ in $G$, where $e_i=v_i v_{i+1}$ for every $i$ in $\{0,\ldots,k-1\}$, is an \textit{$H$-walk} if $(c(e_0),a_0,c(e_1),\ldots,c(e_{k-2}),a_{k-2},c(e_{k-1}))$ is a walk in $H$, with $a_i=c(e_i)c(e_{i+1})$ for every $i \in \{0,\dots,k-2\}$. We will say that $W$ is \textit{closed} if $v_0=v_k$ and $c(e_{k-2})c(e_0) \in E(H)$. Let $W$ be an $H$-walk, if $W$ is a trail then $W$ will be called \textit{$H$-trail}. Notice that if $H$ is a complete graph without loops, then an $H$-walk is a properly colored walk.

Since, properly colored walks have been very useful for modeling and solving different problems, as we mentioned above, $H$-colored graphs and $H$-walks can also model interesting problems, for example, routing problems or in communications networks were some transitions are restricted because of natural phenomenon, external attack or failure. More applications of colored walks with restrictions in color's succession can be found in \cite{a3}.

The concepts of $H$-coloring and $H$-walks were introduced, for the first time by Linek and Sands in \cite{Linek}, and have been worked mainly in the context of kernel theory and related topics, see \cite{H-caminos1,H-caminos2,H-caminos3}. 

In \cite{H-paseos}, Galeana-S\'{a}nchez, Rojas-Monroy, S\'{a}nchez-L\'{o}pez and Villarreal-Vald\'{e}s defined the graph $G_u$ as follows: Let $u$ be a vertex of $G$; $G_u$ is the graph such that $V(G_u)=\{e \in E(G): e \textrm{ is incident}$ $\textrm{with } u\}$, and two different vertices $a$ and $b$ are joining by only one edge in $G_u$ if and only if $c(a)$ and $c(b)$ are adjacent in $H$.

They also showed necessary and sufficient conditions for the existence of closed Euler $H$-trails, as follows.

\begin{theorem}[\cite{H-paseos}]\label{theorem:H-trails}
Let $H$ be a graph possibly with loops and $G$ be an $H$-colored multigraph without loops. Suppose that $G$ is Eulerian and $G_u$ is a complete $k_u$-partite graph, for every $u$ in $V(G)$ and for some $k_u$ in $\mathbb{N}$. Then $G$ has a closed Euler $H$-trail if and only if $\vert C_i^u \vert \leq \sum_{j \neq i} \vert C_j^u \vert$ for every $u$ in $V(G)$, where $\{C_1^u,...,C_{k_u}^u\}$ is the partition of $V(G_u)$ into independent sets.
\end{theorem}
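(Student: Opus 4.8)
The plan is to recognize this statement as a common generalization of Kotzig's theorem (Theorem~\ref{theorem:Kotzig}) and to attack it through the language of \emph{transition systems}. At a vertex $u$ of the Eulerian multigraph $G$, any closed Euler trail visits $u$ exactly $\deg(u)/2$ times, and each visit pairs the two edges consumed at that visit; thus a closed Euler trail induces, at every $u$, a partition of the edges incident with $u$ into $\deg(u)/2$ pairs, i.e.\ a perfect matching of the vertex set $V(G_u)$. The key observation is that the trail is an $H$-trail precisely when each such pair $\{e,f\}$ satisfies $c(e)c(f)\in E(H)$, which by the definition of $G_u$ means $\{e,f\}\in E(G_u)$. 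Hence a closed Euler $H$-trail exists if and only if one can select a perfect matching $M_u$ in each $G_u$ so that the resulting transition system yields a single connected closed trail (the transition closing the trail at the start vertex is itself one of the chosen pairs, so the closedness condition of an $H$-trail is automatically met).

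For the necessity direction I would argue locally: from a closed Euler $H$-trail, read off the perfect matching $M_u$ of $G_u$ described above. Since $G_u$ is a complete $k_u$-partite graph with parts $C_1^u,\dots,C_{k_u}^u$, and $|V(G_u)|=\deg(u)$ is even because $G$ is Eulerian, the existence of a perfect matching is equivalent, by the standard characterization of maximum matchings in complete multipartite graphs, to $\max_i|C_i^u|\le\tfrac12|V(G_u)|$, i.e.\ to $|C_i^u|\le\sum_{j\neq i}|C_j^u|$ for every $i$. This direction is essentially routine once the transition-system viewpoint is in place.

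For the sufficiency direction, the condition guarantees (again by the complete-multipartite matching characterization) a perfect matching $M_u$ in each $G_u$. Choosing these arbitrarily produces a transition system decomposing $E(G)$ into a family of edge-disjoint closed $H$-trails. If this family has more than one member, then since $G$ is connected two of the trails, say $T_1$ and $T_2$, share a vertex $u$; let $\{a_1,a_2\}\in M_u$ be a transition of $T_1$ and $\{b_1,b_2\}\in M_u$ a transition of $T_2$, so that $a_1,a_2,b_1,b_2$ are four distinct vertices of $G_u$ with $a_1,a_2$ in different parts and $b_1,b_2$ in different parts. Replacing these two matching edges by one of the cross-pairings $\{a_1,b_1\},\{a_2,b_2\}$ or $\{a_1,b_2\},\{a_2,b_1\}$ merges $T_1$ and $T_2$ into a single closed trail while leaving all other transitions untouched. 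The crucial point --- and the main obstacle --- is to show that at least one of these cross-pairings again consists of edges of $G_u$; this is exactly where the complete-multipartite hypothesis is used. A short case analysis on the parts of $a_1,a_2,b_1,b_2$ shows that if both cross-pairings failed, then two of the four vertices would be forced to lie in a common part in a way that puts $a_1,a_2$ (or $b_1,b_2$) into the same part, a contradiction; hence a valid switch always exists.

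Finishing the argument, each switch preserves the property that every transition is an edge of the corresponding $G_u$ (so the family still consists of closed $H$-trails) while strictly decreasing the number of trails by one. Iterating, after finitely many switches the transition system defines a single closed $H$-trail using every edge of $G$, that is, a closed Euler $H$-trail. I expect the delicate points to be the book-keeping that a switch at a shared vertex genuinely fuses two trails rather than splitting one (a standard but easy-to-misstate fact about transition systems), together with the verification that the complete-multipartite structure is precisely what makes the merging switch available; the matching-existence and necessity parts should be comparatively short.
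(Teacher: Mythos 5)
Your proposal is correct and takes essentially the same approach as the paper: your transition-system viewpoint is exactly the paper's encoding of edge-pairings at each vertex as perfect matchings of the graphs $G_u$ inside $L_2^H(G)$ (Theorem~\ref{theo:2}, Corollary~\ref{cor:1}), your complete-multipartite matching criterion is the Tutte-theorem argument the paper uses when deriving Kotzig's theorem as a corollary, and your merging switch at a shared vertex is precisely the case analysis on parts carried out in the proof of Theorem~\ref{theo:5}. The only cosmetic difference is that you manipulate transition systems directly rather than packaging them as matchings in the auxiliary graph $L_2^H(G)$.
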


In \cite{H-dinamicos}, Ben\'{i}tez-Bobadilla, Galeana-S\'{a}nchez and Hern\'{a}ndez-Cruz introduced a generalization of $H$-walk as follows. They allowed ``lane changes", i.e., they allowed concatenation of two $H$-walks as long as, the last edge of the first one and the first edge of the second one satisfy that $x_{k-1}=y_0$ and $x_k=y_1$. As a result, they defined the following new concept: Let $G$ be an $H$-colored multigraph, a \textit{dynamic $H$-walk} in $G$ is a sequence of vertices $W=(x_0,x_1,\ldots,x_k)$ in $G$ such that for each $i \in \{0,\ldots,k-2\}$ there exists an edge $f_i=x_ix_{i+1}$ and there exists an edge $f_{i+1}=x_{i+1}x_{i+2}$ such that $c(f_i)c(f_{i+1})$ is an edge in $H$.

For the purpose of this paper, we need a definition and notation that will allows us to know the edges that belong to a dynamic $H$-walk. So, we will say that $W=(v_0,e_0^1,\ldots, e_0^{k_0},v_1,e_1^1,\ldots,$ $e_1^{k_1},v_2,\ldots,v_{n-1},e_{n-1}^{k_{n-1}},\ldots,$ $e_{n-1}^{k_{n-1}},v_n)$, where for each $i \in \{0,\ldots, n-1\}$, $k_i \geq 1$ and $e_i^j = v_iv_{i+1}$ for every $j \in \{1,\ldots, k_i \}$, is a \textit{dynamic $H$-walk} if $c(e_i^{k_i})c(e_{i+1}^1)$ is an edge in $H$, for each $i \in \{0,\ldots,n-2\}$. If $W$ is a dynamic $H$-walk that does not repeat edges, then $W$ will be called \textit{dynamic} $H$\textit{-trail}. We will say that a dynamic $H$-trail, $W$, is an \textit{Euler dynamic $H$-trail} if $E(G)=E(W)$. We will say that a dynamic $H$-trail is a \textit{closed dynamic $H$-trail} whenever a) $v_0=v_n$ and $c(e_{n-1}^{k_{n-1}})c(e_0^1)$ is an edge in $H$; or b) $v_1=v_n$ and $e_{n-1}^{k_{n-1}}$ and $e_0^1$ are parallel in $G$. Notice that if $W$ is a closed dynamic $H$-walk that satisfies condition b, then $W$ can be rewrite as $W=(v_1,e_1^1,...,v_{n-1}=v_0,e_{n-1}^1,\ldots,e_{n-1}^{k_{n-1}},e_0^1,\ldots, e_0^{k_0},v_n=v_1)$, and $W$ satisfies condition (a) (unless $n=1$, i.e., $W$ is of the form $(x,e_0,\ldots,e_k,y)$, where $k \geq 1$).

It follows from the definition of dynamic $H$-trail that every $H$-trail in $G$ is a dynamic $H$-trail in $G$. Moreover, if $G$ has no parallel edges, then every dynamic $H$-trail is an $H$-trail.

A motivation for the study of dynamic $H$-walks are their possibles applications. For example, suppose that it is require to send a message from point $A$ to point $B$ through communication network which can have faults in its links (such as damage, attack, virus or blockage), where each one of its edge has an assigned color depending on the probability that the message is delivered correctly (in time, complete, unchanged or virus-free). In this case, the vertices of $H$ are the colors assigned to the edges of the network and the set of edges of $H$ will depend on the transitions that are convenient (for example, if transition with the same probability of failure are forbidden, then $H$ will have no loops). In practice, it is possible to send messages simultaneously over two or more connections. In these cases, the use of dynamic H-walks can improve message delivery.

Motivated by the Theorem \ref{theorem:H-trails} and its proof, we think that it is possible to give conditions, similar to those of Theorem \ref{theorem:H-trails}, on an $H$-colored multigraph that guarantee the existence of closed Euler dynamic $H$-trail. In the process, we find an auxiliary graph, we call it $L_3^H(G)$, that allows us to link the closed dynamic $H$-trails with hamiltonian graphs.

The rest of the paper is organized as follows. Section \ref{Section:NyT} is devoted to give some notation and terminology, which will be used throughout the paper. In Section \ref{Section:Main}, we will define the graph $L_n^H(G)$. Then we will prove that $G$ contains a closed Euler dynamic $H$-trails if and only if $L_n^H(G)$ is hamiltonian, for every $n \geq 3$.

\section{Notation and Terminology}\label{Section:NyT}

Let $G$ be a multigraph. If $e$ is an edge and $u$ and $v$ are the vertices such that $e=uv$, then $e$ is said to \textit{join} $u$ and $v$, we will say that $u$ and $v$ are the ends of $e$, we will say that the edge $e$ is incident with $u$ (respectively $v$) and also we will say that $u$ and $v$ are adjacent. If $u=v$, then the edge $e$ is a loop. Let $e$ and $f$ be two edges with the same end vertices, we will say $e$ and $f$ are parallel. Its said that $G$ is a \textit{looped} graph whenever for every vertex $v$ in $G$, there exists a loop $e$ incident with $v$. A graph $G'$ (multigraph or simple graph) is a subgraph (submultigraph or simple subgraph, respectively) of $G$ if $V(G') \subseteq V(G)$ and $E(G')\subseteq E(G)$. Let $S$ be a nonempty subset of $V(G)$; the subgraph (submultigraph or simple subgraph) of $G$ whose vertex set is $S$, and whose edge set is the set of those edges of G that have both ends in $S$, is called the subgraph (submultigraph or simple subgraph) of $G$ induced by $S$, and is denoted by $G[S]$.

A walk in a multigraph $G$ is a sequence $(v_0, e_0, v_1, e_1, \ldots , e_{k-1}, v_k)$, where $e_i = v_i v_{i+1}$ for every $i$ in $\{0, \ldots , k-1\}$. We will say that a walk is closed if $v_0 = v_k$. If $v_i \neq v_j$ for all $i$ and $j$ with $i \neq j$, it is called a path. A cycle is a closed walk $(v_0, e_0, v_1, e_1, \ldots , e_{k-1}, v_k , e_k, v_0)$, with $k \geq 3$, such that $v_i \neq v_j$ for all $i$ and $j$ with $i \neq j$. In a multigraph $G$ a walk in which no edge is repeated is a trail. A closed Euler trail in a multigraph $G$, is a closed walk which traverses each edge of $G$ exactly once. If $W=(v_0, e_0, v_1, e_1, \ldots , e_{k-1}, v_k)$ and $W' = (v_k , e_k, v_{k+1}, e_{k+1}, \ldots , e_{t-1}, v_t)$ are two walks, the walk $(v_k , e_{k-1}, \ldots , e_1, v_1, e_0, v_0)$, obtained by reversing $W$, is denoted by $W^{-1}$ and the walk $(v_0, e_0, v_1, \ldots , e_{k-1}, v_k,$  $e_k, v_{k+1}, e_{k+1}, \ldots , e_{t-1}, v_t)$, obtaining by concatenating $W$ and $W'$ at $v_k$, is denoted by $W \cup W'$.

A simple graph $G$ is said to be multipartite, if for some positive integer $k$, there exists
a partition ${X_1, \ldots , X_k }$ of $V(G)$, such that $X_i$ is an independent set in $G$ (that is no two vertices of $X_i$ are adjacent) for every $i$ in $\{1, \ldots , k\}$, in this case, also $G$ is called $k$-partite. It said that $G$ is a complete $k$-partite graph whenever $G$ is $k$-partite and for every $u$ in $X_i$ and for every $v$ in $X_j$ , with $i \neq j$ , we have that $u$ and $v$ are adjacent, denoted by $K_{n_1,\ldots,n_k}$ where $\vert X_i \vert = n_i$ for every $i$ in $\{1, \ldots, k\}$. In the particular case when $k=2$, the graph $G$ is said to be bipartite graph.
A matching in a graph $G$ is a subset $M$ of $E(G)$, such that no two elements of $M$ are incident
with the same vertex in $G$. A matching $M$ saturates a vertex $v$ if some edge of $M$ is
incident with $v$. If every vertex of $G$ is saturated, the matching $M$ is said to be perfect.
We will need the following results.

\begin{theorem}[Tutte \cite{tutte}]\label{theo:Tutte}
$G$ has a perfect matching if and only if $o(G\setminus S) \leq \vert S\vert$ for all proper subset $S$ of $V(G)$.
\end{theorem}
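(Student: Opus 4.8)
The plan is to prove the two implications separately, with the forward (necessity) direction being a short counting argument and the converse (sufficiency) carrying essentially all the difficulty. Throughout, $o(G\setminus S)$ denotes the number of odd components (components of odd order) of $G\setminus S$.

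For necessity, suppose $G$ has a perfect matching $M$, fix a subset $S$ of $V(G)$, and let $C_1,\ldots,C_m$ be the odd components of $G\setminus S$. Since $|V(C_i)|$ is odd, the edges of $M$ cannot saturate all vertices of $C_i$ within $C_i$ (a perfect matching of $C_i$ would need $|V(C_i)|$ even), so at least one vertex of each $C_i$ is matched by $M$ to a vertex of $S$, the only place edges can leave $C_i$. As the $C_i$ are vertex-disjoint, these partners in $S$ are distinct, whence $o(G\setminus S)=m\leq |S|$.

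For sufficiency I would argue by contradiction using an edge-maximal counterexample. First, taking $S=\emptyset$ forces $o(G)=0$, and since $|V(G)|\equiv o(G)\pmod 2$, the order $|V(G)|$ is even. Now among all graphs on $V(G)$ satisfying Tutte's condition but admitting no perfect matching, choose $G$ with the maximum number of edges. Adding any non-edge $e$ can only merge components of $G\setminus S$, so it cannot increase $o(G\setminus S)$; hence $G+e$ still satisfies Tutte's condition, and by maximality $G+e$ has a perfect matching while $G$ does not. The heart of the argument is to study the set $U$ of vertices adjacent to every other vertex of $G$ and to show that every component of $G\setminus U$ is complete. If not, some component contains vertices $x,y,z$ with $xy,yz\in E(G)$ but $xz\notin E(G)$, and since $y\notin U$ there is a vertex $w$ with $yw\notin E(G)$. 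Letting $M_1,M_2$ be perfect matchings of $G+xz$ and $G+yw$, necessarily $xz\in M_1$ and $yw\in M_2$, and I would examine $M_1\triangle M_2$, a disjoint union of alternating even cycles. When $xz$ and $yw$ lie on different cycles, swapping $M_1$- and $M_2$-edges along the cycle through $yw$ yields a perfect matching of $G$ avoiding both forbidden edges, a contradiction.

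I expect the genuinely delicate case — the main obstacle — to be when $xz$ and $yw$ lie on the same alternating cycle, where one must reroute through the path $x\,y\,z$, splicing a suitable arc of the cycle with one of the available edges $xy$ or $yz$ so as to again produce a perfect matching of $G$; this forces careful tracking of the alternation and of the cyclic order of $x,y,z,w$. Granting that each component of $G\setminus U$ is complete, I would then assemble the contradiction directly: by Tutte's condition $o(G\setminus U)\leq|U|$, so I match one vertex of each odd component to a distinct vertex of $U$ (legal, since $U$ is universal), leaving an even complete graph in every component to be matched internally; the leftover vertices of $U$ number $|U|-o(G\setminus U)$, which is even by the parity computation above and lies in the clique induced by $U$, hence is matchable. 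This exhibits a perfect matching of $G$, contradicting the choice of $G$ and completing the proof.
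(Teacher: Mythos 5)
First, a point of comparison: the paper never proves this statement at all --- Theorem~\ref{theo:Tutte} is quoted from Tutte's 1947 paper and used as a black box (e.g.\ in the proof of the Kotzig corollary) --- so there is no proof in the paper to measure yours against; your argument has to stand on its own. Most of it does. The necessity count is correct; so are the parity argument from $S=\emptyset$, the observation that adding an edge cannot increase $o(G\setminus S)$ and hence preserves Tutte's condition (together with the implicit fact that the edge-maximal counterexample $G$ is not complete, since a complete graph of even order has a perfect matching, so a non-edge exists), the forcing of $xz\in M_1$ and $yw\in M_2$, the swap in the case where $xz$ and $yw$ lie on different cycles of $M_1\triangle M_2$, and the final assembly of a perfect matching once every component of $G\setminus U$ is known to be complete. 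This is the standard Lov\'{a}sz edge-maximality proof, correctly reproduced --- except at one point.

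The gap is precisely the case you yourself flag as the main obstacle: $xz$ and $yw$ on the same alternating cycle $C$ of $M_1\triangle M_2$. There you only announce that one ``must reroute through the path $x\,y\,z$''; no matching is exhibited, so the sufficiency proof is incomplete exactly at its crux, and the key claim (every component of $G\setminus U$ is complete) remains unproven. To close it: traverse $C$ from $y$ starting along the edge $yw$, and let $v$ be the first vertex of $\{x,z\}$ so reached; assume $v=z$ (if $v=x$, run the same argument with the edge $xy$ in place of $yz$). Let $Q$ be the arc of $C$ from $y$ through $w$ to $z$, and $R$ the complementary arc, which contains $x$ and the edge $xz$. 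Since $C$ alternates and $x\notin V(Q)$, the arc $Q$ begins and ends with $M_2$-edges, so its $M_1$-edges saturate exactly $V(Q)\setminus\{y,z\}$ and none of them is $xz$; similarly $R$ begins and ends with $M_1$-edges, so its $M_2$-edges saturate exactly $V(R)\setminus\{y,z\}$ and none of them is $yw$. Moreover every vertex off $C$ is matched by $M_2$ to another vertex off $C$. Hence the union of the $M_2$-edges disjoint from $C$, the $M_1$-edges of $Q$, the $M_2$-edges of $R$, and the edge $yz$ is a perfect matching of $G$ containing neither $xz$ nor $yw$ --- the contradiction you wanted. With this case written out your proof is complete and correct; as submitted, it is an outline whose hardest step is asserted rather than proved.
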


\begin{lemma}\label{lemma:Union-Perfect}
Let $M$ and $M'$ be two perfect matching of a graph $G$. If $M \cap M' = \emptyset$, then there exists a partition of the vertices of $G$ into even cycles. Moreover, every cycle alternate edges between $M$ and $M'$.
\end{lemma}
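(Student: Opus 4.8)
The plan is to study the spanning subgraph $F$ of $G$ with vertex set $V(G)$ and edge set $M\cup M'$, and to show directly that $F$ is a disjoint union of cycles each of which alternates between the two matchings. The first step is to compute degrees in $F$. Since $M$ is a perfect matching, every vertex $v\in V(G)$ is incident with exactly one edge of $M$, and since $M'$ is a perfect matching it is also incident with exactly one edge of $M'$. The hypothesis $M\cap M'=\emptyset$ guarantees that these two edges are distinct, so $v$ is incident with exactly two edges of $F$. Hence $F$ is $2$-regular.

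The second step is to recover the cycle structure from $2$-regularity. Starting at any vertex and repeatedly leaving by the unique not-yet-used incident edge, the walk can never stop at a vertex of degree $2$ except by returning to its starting vertex; thus each connected component of $F$ is a cycle, and since $V(F)=V(G)$ these cycles partition $V(G)$, which is exactly the partition claimed in the statement. I would then turn to the alternation property: at every vertex $v$ of a component cycle $C$, the two edges of $C$ meeting $v$ are precisely the two edges of $F$ incident with $v$, and by the degree computation exactly one of them lies in $M$ and the other in $M'$. Consequently no two consecutive edges of $C$ belong to the same matching, so the edges of $C$ read $M,M',M,M',\dots$ around the cycle. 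Because $C$ is closed, the pattern must match up when it returns to its first edge, and this forces $C$ to have an even number of edges.

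The only point that needs care is what the shortest possible component of $F$ can look like. In a multigraph, $M$ and $M'$ could each contain one of a pair of parallel edges joining the same two vertices, producing a $2$-edge alternating component that is not a cycle under the paper's convention $k\ge 3$. Since in a simple graph parallel edges are forbidden, this degeneracy cannot occur, every alternating component has at least $4$ edges, and the lemma holds as stated; this is the subtlety I would make explicit, recording that the lemma is applied to simple graphs. Everything else is routine, so I expect no genuine obstacle beyond flagging this degenerate case.
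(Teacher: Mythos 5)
Your proof is correct. The paper states Lemma~\ref{lemma:Union-Perfect} without any proof, treating it as a standard auxiliary fact, so there is no argument of the authors' to compare against; yours is the canonical one: the degree count (each vertex meets exactly one edge of $M$ and one of $M'$, and these are distinct because $M \cap M' = \emptyset$), the decomposition of a $2$-regular graph into vertex-disjoint cycles, and the observation that alternation around a closed cycle forces even length are all sound. Your closing caveat is also well taken and worth recording: in a multigraph two disjoint perfect matchings can produce a component consisting of two parallel edges, which is not a cycle under the paper's convention ($k \geq 3$), so the lemma must be read for simple graphs --- which suffices for the paper, since it is applied only to $L_2^H(G)$, explicitly noted there to be simple.
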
 

\begin{figure}[tpb]
\centering 
	\subfigure[$G$ is an $H$-colored multigraph]
	{\scalebox{1}{\begin{tikzpicture}
	\tikzset{every loop/.style={min distance=15mm,in=120,out=60,looseness=10}}
	\tikzstyle{vertex}=[circle, draw=black] 
		\node[](G) at (0,2.5){\huge{$G$}};		
		\node[vertex](v1) at (1.5,2.5) {};
		\node[left=1mm of v1](Et.u) {$v_1$};
		\node[vertex](v2) at (4,2.5) {};
		\node[right=1mm of v2](Et.u) {$v_2$};
		\node[vertex](v3) at (1.5,0) {};
		\node[left=1mm of v3](Et.u) {$v_3$};
		\node[vertex](v4) at (4,0) {};
		\node[right=1mm of v4](Et.u) {$v_4$};
		
		\path	(v1) 	edge [above,blue,very thick] node {\textcolor{black}									{$e_1$}} (v2)
						edge [right,red,very thick,dashed] node {\textcolor{black}{$e_3$}} (v3)
						edge [bend right,left,blue,very thick] node
						{\textcolor{black}	{$e_2$}} (v3)
				(v2)	edge [left,blue,very thick] node {\textcolor{black}{$e_6$}} 						(v4)
						edge [bend left, right,red,very thick,dashed] node {\textcolor{black}		
				{$e_5$}} (v4)
				(v3)	edge [below,red,very thick,dashed] node {\textcolor{black}{$e_4$}} (v4);

		\node[](H) at (0,-1){\huge{$H$}};
		\node[vertex,fill=blue](G) at (1.5,-2.5) {};
		\node[below=1mm of G]{$G$};		
		\node[vertex,fill=red](R) at (3.5,-2.5) {};
		\node[below=1mm of R] {$R$};
		
		\path	(G) edge [loop above] node {} ()
				(R) edge [loop above,dashed,thick] node {} ();		
		
	\end{tikzpicture}}}
	\hfill
	\subfigure[$L_3^H(G)$]{\scalebox{1}{\begin{tikzpicture}
	\tikzset{every loop/.style={min distance=15mm,in=120,out=60,looseness=10}}
	\tikzstyle{vertex}=[circle, draw=black] 
		
		\node[vertex] (Gv1e1) at (-2, 2.5) {};
		\node[left=0mm of Gv1e1] {\small{$f(v_1,e_1)$}};
		\node[vertex] (Gv1e2) at (-4, 1) {};
		\node[left=0mm of Gv1e2] {\small{$f(v_1,e_2)$}};
		\node[vertex] (Gv1e3) at (-2, 1) {};
		\node[right=0mm of Gv1e3] {\small{$f(v_1,e_3)$}};
		\draw[dotted, thin] (-1.5,0.5) rectangle (-4.5,3);
		\node[](G1) at (-4.2,2.5){};	
		\node[left=1mm of G1] {\Large{$G_{v_1}$}};
		
		\node[vertex] (Gv2e1) at (1, 2.5) {};
		\node[right=0mm of Gv2e1] {\small{$f(v_2,e_1)$}};
		\node[vertex] (Gv2e6) at (1, 1) {};
		\node[vertex] (Gv2e5) at (3, 1) {};
		\draw[dotted, thin] (0.5,0.5) rectangle (3.5,3);		
		\node[](G2) at (3.2,2.5){};
		\node[right=1mm of G2] {\Large{$G_{v_2}$}};

		\node[vertex] (Gv3e3) at (-2, -1.5) {};
		\node[right=0mm of Gv3e3] {\small{$f(v_3,e_3)$}};
		\node[vertex] (Gv3e4) at (-2, -3) {};
		\node[vertex] (Gv3e2) at (-4, -1.5) {};
		\node[left=0mm of Gv3e2] {\small{$f(v_3,e_2)$}};
		\draw[dotted, thin] (-1.5,-1) rectangle (-4.5,-3.5);		
		\node[](G3) at (-4.2,-3){};
		\node[left=1mm of G3] {\Large{$G_{v_3}$}};
		
		\node[vertex] (Gv4e4) at (1, -3) {};
		\node[vertex] (Gv4e5) at (3, -1.5) {};
		\node[vertex] (Gv4e6) at (1, -1.5) {};
		\draw[dotted,thin] (0.5,-1) rectangle (3.5,-3.5);
		\node[](G4) at (3.2,-3){};
		\node[right=1mm of G4] {\Large{$G_{v_4}$}};
		
		\node[vertex] (me1) at (-0.5, 2.5) {};
		\node[below=0mm of me1] {\small{$m(e_1)$}};
		\node[vertex] (me2) at (-4, -0.25) {};
		\node[left=0mm of me2] {\small{$m(e_2)$}};
		\node[vertex] (me3) at (-2, -0.25) {};
		\node[right=0mm of me3] {\small{$m(e_3)$}};
		\node[vertex] (me4) at (-0.5, -3) {};
		\node[vertex] (me5) at (3, -0.25) {};
		\node[vertex] (me6) at (1, -0.25) {};
				
		\draw[] (Gv1e1) -- (me1);
		\draw[] (me1) -- (Gv2e1);
		\draw[] (Gv1e2) -- (me2);
		\draw[] (me2) -- (Gv3e2);
		\draw[] (Gv1e3) -- (me3);
		\draw[] (me3) -- (Gv3e3);
		\draw[] (Gv3e4) -- (me4);
		\draw[] (me4) -- (Gv4e4);
		\draw[] (Gv2e5) -- (me5);
		\draw[] (me5) -- (Gv4e5);
		\draw[] (Gv2e6) -- (me6);
		\draw[] (me6) -- (Gv4e6);

		\draw[] (Gv1e3) -- (Gv3e2);
		\draw[] (Gv1e2) -- (Gv3e3);
		\draw[] (Gv2e6) -- (Gv4e5);
		\draw[] (Gv2e5) -- (Gv4e6);

		\draw[] (Gv1e1) -- (Gv1e2);
		\draw[] (Gv2e1) -- (Gv2e6);
		\draw[] (Gv3e3) -- (Gv3e4);
		\draw[] (Gv4e4) -- (Gv4e5);
	\end{tikzpicture}}}
	\caption{The sequence $P=(v_1,e_1,v_2,e_6,e_5,v_4,e_4,v_3,e_3,e_2,v_1)$ is a closed Euler dynamic $H$-trail in $G$ but there is no closed Euler $H$-trail in $G$}
\label{fig:GV(G)}
\end{figure}
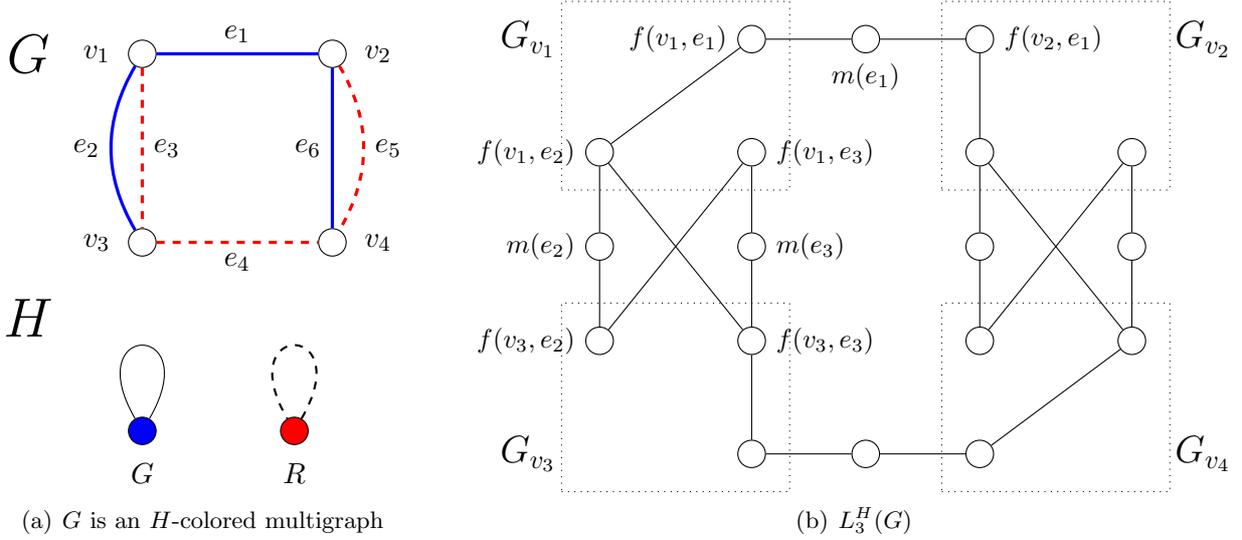

\section{Main results}
\label{Section:Main}

In what follows, $H$ will be a graph possibly with loops, and $G$ will be a multigraph without loops.

First of all, we will introduce an auxiliary graph, denoted by $L_n^H(G)$, which is defined as follows.

\begin{definition}
Let $G$ be an $H$-colored multigraph with $\vert E(G)\vert =q$. For $n \geq 2$, $L_n^H(G)$ is the graph with $nq$ vertices, obtained as follows: for each edge $e=uv$ of $G$, we take two vertices $f(u,e)$ and $f(v,e)$ in $L_n^H(G)$, and adding a path with $n-2$ new intermediate vertices connecting $f(u,e)$ and $f(v,e)$. And the rest of the edges of $L_n^H(G)$ are defined as follows: a) $f(u,e)$ and $f(u,g)$ are adjacent iff $e \neq g$ and $c(e)c(g) \in E(H)$; b) $f(u,e)$ and $f(v,g)$ are adjacent iff $u \neq v$ and $e$ and $g$ are parallel in $G$.
\end{definition}

Notice that $L_n^H(G)$ is a simple graph.

Before giving a new way to construct the graph $L_n^H(G)$, we first introduce some additional notation.

Recall that $G_u$ is the graph such that $V(G_u)=\{e \in E(G) : e \textrm{ is incident}$ $\textrm{with } u\}$, and two different vertices $a$ and $b$ are joining by only one edge in $G_u$ if and only if $c(a)$ and $c(b)$ are adjacent in $H$.

For each $e=xy$ in $E(G)$, by the definition of $G_x$ and $G_y$, we have that $e \in V(G_x)$ and $e \in V(G_y)$. So, we will say that $f(x,e)$ and $f(y,e)$ are the copies of $e$ seen as a vertex in $G_x$ and $G_y$, respectively; and if $E_{xy}$ is the set of all the edges with end vertices $x$ and $y$, then we will say that $E_{xy}^x=\{f(x,e) : e \in E_{xy}\}$. 

Notice that $L_n^H(G)$ can be constructed as follows. First take the disjoint union of $G_x$, for every $x$ in $V(G)$. Then, for every pair of distinct vertices, $x$ and $y$ in $V(G)$, such that $\vert E_{xy}\vert =m \geq 1$, we will have a complete bipartite graph $K_{m,m}$ between $E_{xy}^x$ and $E_{xy}^y$. Finally, for every $e=xy \in E(G)$, we change the edge joining $f(x,e)$ and $f(y,e)$ by the path with $n-2$ new intermediate vertices. The construction of $L_3^H(G)$ is illustrated in Figure \ref{fig:GV(G)}.

It follows from the definition of $L_2^H(G)$ that $M_J=\{f(x,e)f(y,e) \in E(L_2^H(G)) : e=xy \in E(G)\}$ is a perfect matching, that we will called it \textit{joint matching of} $L_2^H(G)$.

The following two lemmas show how to construct a cycle in $L_2^H(G)$, based on the order of the edges in a closed dynamic H-trail in $G$, and vice versa.

\begin{lemma}\label{lemma:H-trail to cycle}
Let $G$ be an $H$-colored multigraph. If $P=(x_0,e_1,\ldots, e_{p_0},x_1, e_{p_0+1},$ $\ldots,e_{p_0+\ldots+p_1},x_2,$ $\ldots, x_{n-1},$ $e_{p_0+\ldots+p_{n-2}+1},\ldots,$ $e_{p_0+\ldots+p_{n-1}},x_n)$ is a closed dynamic $H$-trail in $G$, then $C=(f(x_0,e_1),$ $f(x_1,e_1),\ldots,$ $f(x_0,e_{p_0}), f(x_1,e_{p_0}), f(x_1,e_{p_0+1}),f(x_2,e_{p_0+1})\ldots,f(x_n,e_{p_0+\ldots+p_{n-1}}) ,f(x_0,e_1) )$ is a \newline cycle in $L_2^H(G)$.  
\end{lemma}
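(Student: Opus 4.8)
The plan is to verify directly that every pair of consecutive terms of $C$ is an edge of $L_2^H(G)$ and that the $2q$ vertices preceding the final return to $f(x_0,e_1)$ are pairwise distinct, where $q=p_0+\cdots+p_{n-1}$ is the number of edges of $P$. The organizing observation is that the three ways a consecutive pair of $C$ can arise correspond exactly to the three families of edges of $L_2^H(G)$: traversing a single edge of $G$ corresponds to a joint-matching edge $f(x_i,e_j)f(x_{i+1},e_j)$ (the length-one path joining the two copies of $e_j$, present because $n-2=0$); switching between two parallel edges inside one block corresponds to an edge of type (b); and passing from the last edge of one block to the first edge of the next corresponds to an edge of type (a), whose existence is guaranteed precisely by the defining condition $c(e_i^{k_i})c(e_{i+1}^1)\in E(H)$ of a dynamic $H$-trail.

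First I would fix block notation: writing $s_i=p_0+\cdots+p_{i-1}$, the $i$-th block consists of the mutually parallel edges $e_{s_i+1},\dots,e_{s_i+p_i}$, all joining $x_i$ and $x_{i+1}$, and inside this block $C$ reads $f(x_i,e_{s_i+1}),f(x_{i+1},e_{s_i+1}),f(x_i,e_{s_i+2}),f(x_{i+1},e_{s_i+2}),\dots,f(x_i,e_{s_i+p_i}),f(x_{i+1},e_{s_i+p_i})$. I would then dispatch the three types. The pairs $f(x_i,e_{s_i+t})f(x_{i+1},e_{s_i+t})$ are joint-matching edges. The pairs $f(x_{i+1},e_{s_i+t})f(x_i,e_{s_i+t+1})$ join copies of two distinct parallel edges sitting at the two distinct ends $x_{i+1}\neq x_i$ (distinct since $G$ is loopless), hence are edges of type (b). The transition pairs $f(x_{i+1},e_{s_i+p_i})f(x_{i+1},e_{s_i+p_i+1})$ join two copies at the common vertex $x_{i+1}$; the two edges are distinct because $P$ is a trail, and the dynamic condition gives $c(e_{s_i+p_i})c(e_{s_i+p_i+1})\in E(H)$, so these are edges of type (a).

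The only pair invoking closedness is the last one, $f(x_n,e_q)f(x_0,e_1)$. If $P$ is closed under condition (a), then $x_n=x_0$, both terms are copies at $x_0$, and $c(e_q)c(e_1)\in E(H)$ with $e_q\neq e_1$ yields an edge of type (a); if $P$ is closed under condition (b), then $x_n=x_1\neq x_0$ while $e_q$ and $e_1$ are parallel, so rule (b) applies. Either way the closing pair is an edge. For injectivity of the vertex list I would note that, $P$ being a trail, its edges $e_1,\dots,e_q$ are pairwise distinct, so $f(a,e_j)=f(b,e_{j'})$ forces $e_j=e_{j'}$ and $a=b$; since each $e_j$ contributes exactly its two copies $f(x_i,e_j),f(x_{i+1},e_j)$ with $x_i\neq x_{i+1}$, the $2q$ listed vertices are distinct even when the $x_i$ repeat as vertices of $G$. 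As a closed dynamic $H$-trail in a loopless multigraph must use at least two edges, $2q\ge 4\ge 3$ and $C$ is a genuine cycle.

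I expect the main obstacle to be purely bookkeeping rather than combinatorial: keeping the cumulative indices $s_i$ consistent so that each consecutive pair of $C$ is correctly classified, and handling the degenerate single-block case ($n=1$) together with the two closedness conventions uniformly. Beyond matching each step of the trail to the appropriate clause in the definition of $L_2^H(G)$, no genuine difficulty should remain.
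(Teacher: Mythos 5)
Your proposal is correct and takes essentially the same route as the paper's proof: both verify directly that each consecutive pair of vertices in $C$ is an edge of $L_2^H(G)$ (joint-matching edges for traversals, type (b) edges for lane changes between parallel edges, and type (a) edges at block transitions supplied by the dynamic $H$-trail condition, including the closing pair), and then conclude $C$ is a cycle because $P$ repeats no edge, so the copies $f(\cdot,e_j)$ are pairwise distinct. If anything, your block-indexed bookkeeping and explicit handling of the two closedness conventions and the degenerate single-block case is slightly more careful than the paper's two-case split on whether consecutive edges are parallel.
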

\begin{proof}
It follows from the definition of $L_2^H(G)$ that $f(e,x_i)f(e,x_{i+1})$ is an edge of $L_2^H(G)$, for every  $e=x_ix_{i+1} \in E(P)$.

Let $e_i$ and $e_{i+1}$ be consecutive edges in $P$ (if $i=p_0+\ldots+p_{n-1}$, then $e_{i+1}=e_1$). If $e_i$ and $e_{i+1}$ are parallel, such that $x_j$ and $x_{j+1}$ are the ends of both edges, then $f(x_{j+1},e_i)$ and $f(x_j,e_{i+1})$ are adjacent in $L_2^H(G)$, by the definition of $L_2^H(G)$. Otherwise, $e_i$ and $e_{i+1}$ are incident with $x_j$, for some $x_j$ in $V(P)$. Since $P$ is a closed dynamic $H$-trail, then $c(e_i)c(e_{i+1})$ is an edge in $H$ and $f(x_j,e_i)f(x_j,e_{i+1}) \in E(L_2^H(G))$. Hence, $C$ is a walk in $L_2^H(G)$. Since, $P$ is a closed dynamic $H$-trail, then $C$ does not repeat vertices and, so $C$ is a cycle. Moreover, $C$  alternate edges between $E(L_2^H(G))\setminus M_J$ and $M_J$.
\end{proof}

\begin{lemma}\label{lemma:Cycle to H-trail}
Let $G$ be an $H$-colored multigraph and $M_J$ be the joint matching of $L_2^H(G)$. If $C=(f(x_1,e_1),$ $f(y_1,e_1),f(x_2,e_2),f(y_2,e_2)\ldots,f(y_q,e_q), f(x_1,e_1))$, where $e_i=x_iy_i \in E(G)$, be a cycle in $L_2^H(G)$ that alternate edges between $E(L_2^H(G))\setminus M_J$ and $M_J$, then the following steps produces a closed dynamic $H$-trail in $G$.
\begin{enumerate}
\item Star with $P=(x_1,e_1,y_1)$ and $k=2$.

\item Let $e_k$, with end vertices $x_k$ and $y_k$. If $e_{k-1}$ and $e_k$ are parallel, $x_k=x_{k-1}$ and $y_k=y_{k-1}$, then we change lanes from the edge $e_{k-1}$ to $e_k$ in $P$. Otherwise, continue $P$ by the edge $e_k$.

\item $k=k+1$.

\item If $k=q+1$, finish. Otherwise, go to step 2.
\end{enumerate}

\end{lemma}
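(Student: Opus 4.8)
The plan is to argue by induction on the loop counter $k$ that the partial sequence $P$ built by Steps 1--4 is always a dynamic $H$-trail, and then to deduce that the final $P$ is closed from the single edge of $C$ that the loop never touches, namely $f(y_q,e_q)f(x_1,e_1)$.

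First I would record the combinatorial shape of $C$ forced by the alternation hypothesis. Writing $e_i=x_iy_i$, the $q$ edges of $C$ lying in $M_J$ are exactly the pairs $f(x_i,e_i)f(y_i,e_i)$, and those outside $M_J$ are exactly the transition edges $f(y_i,e_i)f(x_{i+1},e_{i+1})$ (indices read modulo $q$). Since $C$ is a cycle it repeats no vertex, so the edges $e_1,\dots,e_q$ are pairwise distinct: two equal $e_i$ would repeat a copy $f(\cdot,e_i)$ of $L_2^H(G)$ in $C$. This distinctness already guarantees that the output traverses each edge of $G$ at most once, so whatever $P$ the algorithm returns is a trail. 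I would then classify each transition edge by the two clauses defining $L_2^H(G)$: it is of type (a), where $y_i=x_{i+1}$ and $c(e_i)c(e_{i+1})\in E(H)$, or of type (b), where $e_i,e_{i+1}$ are parallel and, because their two copies sit at different vertices, $x_{i+1}=x_i$ and $y_{i+1}=y_i$. These are precisely the two branches of Step 2, so the algorithm's decision (continue versus change lanes) is the one dictated by the type of the transition edge.

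The invariant I would carry through the induction is: after $e_k$ is processed, $P$ is a dynamic $H$-trail from $x_1$ to $y_k$ in which $e_k$ is the last edge of the final block, a block directed toward $y_k$. The base case $P=(x_1,e_1,y_1)$ is immediate. At a type-(b) step the algorithm changes lanes: $e_k$ is parallel to $e_{k-1}$ and equally directed, so appending it to the final block keeps that block a set of mutually parallel edges with ends $x_{k-1}$ and $y_{k-1}$, leaves the terminal vertex $y_k=y_{k-1}$ unchanged, and imposes no new color constraint. At a type-(a) step the algorithm opens a new block: $x_k=y_{k-1}$ matches the current terminal vertex so the walk stays well formed, and the relation $c(e_{k-1})c(e_k)\in E(H)$ is exactly the color condition required between the last edge $e_{k-1}$ of the old block and the first edge $e_k$ of the new one. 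Distinctness keeps $P$ a trail in both branches, closing the induction; after $k=q$ we have a dynamic $H$-trail from $x_1$ to $y_q$ using $e_1,\dots,e_q$.

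Finally I would read off closedness from the unprocessed edge $f(y_q,e_q)f(x_1,e_1)$, which is a genuine transition edge because $e_q\neq e_1$. If it is of type (a) then $y_q=x_1$ and $c(e_q)c(e_1)\in E(H)$, which is clause (a) of the definition of a closed dynamic $H$-trail with $e_0^1=e_1$ and $e_{n-1}^{k_{n-1}}=e_q$. If it is of type (b) then $e_q$ and $e_1$ are parallel and equally directed, so $y_q=y_1$; since $v_1=y_1$ and $v_n=y_q$ this gives $v_1=v_n$ together with $e_{n-1}^{k_{n-1}}=e_q$ parallel to $e_0^1=e_1$, which is clause (b). I expect the difficulty to be bookkeeping rather than conceptual: one must keep straight that listing $f(x_i,e_i)$ before $f(y_i,e_i)$ fixes the orientation $x_i\to y_i$ of $e_i$, and that the color condition in the definition compares the last edge of one block with the first edge of the next --- exactly the pair singled out by each type-(a) transition --- and no other pair. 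The degenerate situations (say $q=2$, or a run of lane changes that wraps all the way around so that the first and last blocks coincide and $n=1$) I would verify directly against clause (b) and the rewriting remark following the definition of closed dynamic $H$-trail.
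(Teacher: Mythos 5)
Your proposal is correct and follows essentially the same route as the paper's proof: you classify each transition edge of $C$ by the two adjacency clauses in the definition of $L_2^H(G)$, conclude that $P$ is a dynamic $H$-walk (and a trail since the cycle forces $e_1,\ldots,e_q$ to be distinct), and obtain closedness from the edge $f(y_q,e_q)f(x_1,e_1)$ with the same two-case split, including the degenerate all-parallel case. Your explicit induction with a block invariant is only an organizational repackaging of the paper's direct check of consecutive pairs, not a different argument.
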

\begin{proof}

First, we will prove that $P$ is a dynamic $H$-trail in $G$.

Let $e_{k-1}$ and $e_k$, with $2 \leq k \leq q$.

Since $f(y_{k-1},e_{k-1})f(x_k,e_k) \in E(C)$, we have that $f(y_{k-1},e_{k-1})f(x_k,e_k)$ is in $E(L_2^H(G))$. Hence, $y_{k-1}=x_k$ and $c(e_{k-1})c(e_k)\in E(H)$ or $y_{k-1} \neq x_k$ and $e_{k-1}$ and $e_k$ are parallel in $G$.

If $y_{k-1}=x_k$ and $c(e_{k-1})c(e_k) \in E(H)$, then $e_{k-1}$ and $e_k$ are incident in $y_{k-1}=x_k$ and $(x_1,e_1,P,x_{k-1},$ $e_{k-1},y_{k-1}=x_k,e_k,y_k)$ is a dynamic $H$-walk in $G$.

Otherwise, $y_{k-1} \neq x_k$ and $e_{k-1}$ and $e_k$ are parallel edges in $G$, and $(x_1,e_1,P,$ $x_{k-1}=x_k,e_{k-1},e_k,$ $y_{k-1}=y_k)$ is a dynamic $H$-walk in $G$.

Hence, $P$ is a dynamic $H$-walk in $G$.

On the other hand, since $C$ is a cycle, then $e_i$ appears exactly once in $P$, for every $i \in \{1,\ldots,q\}$, and so $P$ is a dynamic $H$-trail.

Now, we prove that $P$ is closed.

Recall that $e_i=x_iy_i$, for every $i\in \{1,\ldots,q\}$, and $y_i=x_{i+1}$ or $x_i=x_{i+1}$ and $y_i=y_{i+1}$ (if $i=q+1$, then $x_{i+1}=x_1$ and $y_{i+1}=y_1$). We will consider two cases.

\textbf{Case 1.} $x_i=x_j$ and $y_i=y_j$, for every pair of distinct elements, $i$ and $j$, in $\{1,\ldots,q\}$.

It follows from the construction of $P$ that $P=(x_1,e_1,\ldots,e_q,y_1)$. Since $C$ is a cycle in $L_2^H(G)$, then $q \geq 2$ and, so $P$ is closed.

\textbf{Case 2.} There exists $i$ in $\{1,\ldots,q\}$ such that $y_i = x_{i+1}$.

If $y_q=x_1$, then $c(e_q)c(e_1) \in E(H)$. Therefore, $P$ is closed.

Otherwise, $e_q$ and $e_1$ are parallel and, so $P$ is closed.\vspace{1em}

Therefore, $P$ is a closed dynamic $H$-trail in $G$. 
\end{proof}

It follows from Lemmas \ref{lemma:H-trail to cycle} and \ref{lemma:Cycle to H-trail} the following result.

\begin{theorem}\label{theo:Bijection}
Let $G$ be an $H$-colored multigraph and $M_J$ be the joint matching. Then, there is a bijection bewteen the set of closed dynamic $H$-trails in $G$ and the set of cycles in $L_2^H(G)$ that alternate edges between $E(L_2^H(G))\setminus M_J$ and $M_J$.
\end{theorem}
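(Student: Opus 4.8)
The plan is to read the two preceding lemmas as defining a pair of maps and then to verify that these maps are mutually inverse. Write $\Phi$ for the assignment of Lemma~\ref{lemma:H-trail to cycle}, which sends a closed dynamic $H$-trail $P$ in $G$ to the cycle $C=\Phi(P)$ in $L_2^H(G)$, and write $\Psi$ for the assignment of Lemma~\ref{lemma:Cycle to H-trail}, which sends an $M_J$-alternating cycle $C$ to the closed dynamic $H$-trail $P=\Psi(C)$. Lemma~\ref{lemma:H-trail to cycle} guarantees that $\Phi(P)$ is indeed an $M_J$-alternating cycle, so $\Phi$ lands in the correct codomain, and Lemma~\ref{lemma:Cycle to H-trail} guarantees that $\Psi(C)$ is indeed a closed dynamic $H$-trail. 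Thus both maps are well defined, and it remains only to check $\Psi\circ\Phi=\mathrm{id}$ and $\Phi\circ\Psi=\mathrm{id}$.

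For $\Psi\circ\Phi=\mathrm{id}$, I would start from $P=(x_0,e_1,\ldots,e_{p_0+\cdots+p_{n-1}},x_n)$ and observe that, by construction, the edges of $\Phi(P)$ lying in $M_J$ are exactly the edges $f(x_i,e)f(x_{i+1},e)$, one for each edge $e=x_ix_{i+1}$ traversed by $P$, listed in the same cyclic order in which $P$ traverses them. Since $\Psi$ reads off precisely this ordered list of edges and reattaches them in the identical order, performing a lane change exactly when two consecutive edges are parallel, which is exactly the situation that produced a type-(b) adjacency under $\Phi$, the trail $\Psi(\Phi(P))$ revisits the edges of $G$ in the same order as $P$ and hence coincides with $P$.

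For $\Phi\circ\Psi=\mathrm{id}$, I would run the argument in the opposite direction: given an $M_J$-alternating cycle $C=(f(x_1,e_1),f(y_1,e_1),\ldots,f(y_q,e_q),f(x_1,e_1))$, the trail $\Psi(C)$ traverses $e_1,\ldots,e_q$ in this order, and feeding this trail into $\Phi$ regenerates, for each $e_i$, the matching pair $f(x_i,e_i)f(y_i,e_i)$ together with the connecting non-matching edge joining $f(y_{i-1},e_{i-1})$ to $f(x_i,e_i)$ dictated by $C$. Because $\Phi$ and $\Psi$ treat the incident case and the parallel case by the same rule, namely type-(a) versus type-(b) adjacency in $L_2^H(G)$, the cycle $\Phi(\Psi(C))$ has the same vertex sequence as $C$.

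The step I expect to be the most delicate is reconciling the two notions of \emph{closed}, conditions (a) and (b) in the definition of a closed dynamic $H$-trail, with the single cyclic structure of $C$, together with the attendant choice of starting vertex and orientation. The remark following the definition (that a type-(b) trail can be rewritten to satisfy (a)) shows that the two conventions describe the same object, so the main care is to fix the base edge and direction consistently on both sides, so that $\Phi$ and $\Psi$ genuinely invert each other rather than merely agreeing up to cyclic rotation or reversal; once these conventions are pinned down, the verification above is routine.
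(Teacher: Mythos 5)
Your overall architecture coincides with the paper's: the paper also takes Lemma~\ref{lemma:H-trail to cycle} as defining the forward map $T$ and uses Lemma~\ref{lemma:Cycle to H-trail} for the reverse direction (it verifies well-definedness, injectivity and surjectivity instead of the two composition identities, but the substance is the same). There is, however, one concrete error in your verification, and it sits exactly at the delicate point of the whole correspondence: you assert, twice, that a lane change occurs \emph{exactly when} two consecutive edges of the trail are parallel, and that this is \emph{exactly} the situation producing a type-(b) adjacency. That equivalence is false. Two parallel edges can be traversed consecutively \emph{through a common endpoint}, with no lane change: if $e$ and $f$ both join $x$ and $y$ and $c(e)c(f)\in E(H)$, then $(x,e,y,f,x)$ is a closed dynamic $H$-trail with no lane change, and its image under $\Phi$ uses the type-(a) edges $f(y,e)f(y,f)$ and $f(x,f)f(x,e)$ inside $G_y$ and $G_x$; the genuinely distinct trail $(x,e,f,y)$, a pure lane change, maps instead to the cycle using the type-(b) edges $f(y,e)f(x,f)$ and $f(y,f)f(x,e)$. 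Under your stated rule, $\Psi$ would send both of these cycles back to the lane-change trail, so $\Psi\circ\Phi\neq\mathrm{id}$, and the claimed bijection collapses precisely on such pairs.

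The fix is the rule actually written in step 2 of Lemma~\ref{lemma:Cycle to H-trail}: the lane change is triggered not by parallelism of $e_{k-1}$ and $e_k$ alone, but by how the connecting (non-matching) edge of $C$ is attached --- it must join copies at \emph{distinct} vertices of $G$, that is $y_{k-1}\neq x_k$, equivalently $x_k=x_{k-1}$ and $y_k=y_{k-1}$ as labels read off $C$; when $y_{k-1}=x_k$ the transition is type-(a) and the trail passes through the common endpoint, even if the two edges happen to be parallel. With the criterion stated this way, your two composition identities do hold and your argument goes through. The remaining point you flag, fixing base edge and orientation so that the maps invert each other on the nose rather than up to rotation or reversal, is real but is the same issue the paper disposes of in Claim 1 of its proof (well-definedness of $T$ on closed trails written with different starting edges), so your treatment of it is adequate.
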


\begin{proof}
Let $G$ be an $H$-colored multigraph and $M_J$ the joint matching of $L_2^H(G)$. We will denote by $\mathcal{P}$ the set of closed dynamic $H$-trails in $G$ and by $\mathcal{C}$ the set of cycles in $L_2^H(G)$ that alternate edges between $E(L_2^H(G))\setminus M_J$ and $M_J$.

Let $T: \mathcal{P} \rightarrow \mathcal{C}$ defined by $T(P)=C$, where $P=(x_0,e_1,\ldots, e_{p_0},x_1,e_{p_0+1},\ldots,e_{p_1},x_2,e_{p_1+1},\ldots ,x_{n-1},$ $e_{p_0+\ldots+p_{n-2}+1},\ldots,$ $e_{p_0+\ldots+p_{n-1}},x_n)$, and $C=(f(x_0,e_1),f(x_1,e_1),\ldots,f(x_0,e_{p_0}), f(x_1,e_{p_0}),f(x_1,e_{p_0+1}),\ldots,$ $f(x_n,e_{p_0+\ldots+p_{n-1}}),f(x_0,e_1) )$.

\textbf{Claim 1.} $T$ is well-defined.

It follows from  Lemma \ref{lemma:H-trail to cycle} that $T(P) \in \mathcal{C}$.

Let $P_1=(x_0,e_0,\ldots, e_{p_0},x_1,e_{p_0+1},\ldots,e_{p_0+p_1},x_2,\ldots,x_{n-1},e_{p_0+\ldots+p_{n-2}+1},\ldots,e_{p_0+\ldots+p_{n-1}},x_n)$ and $P_2=(y_0,f_0,\ldots,f_{q_0},y_1,f_{q_0+1},\ldots, f_{q_0+q_1},y_2,\ldots,y_{n-1},f_{q_0+\ldots+q_{n-2}+1}, \ldots,f_{q_0+\ldots+q_{n-1}},y_n)$ in $\mathcal{P}$ such that $P_1 = P_2$.

Since $P_1=P_2$, then $E(P_1)=E(P_2)$ and there exists $k \in \mathbb{N}$ such that $e_i=f_{i+k (mod  \,p_0+\ldots+p_{n-1})}$ (since the edges are traversed in the same order but the first edge is not the same). It follows from the definition of $T$ that $V(T(P_1))=V(T(P_2))$ and the order of the vertices are the same. Then, we have that $T(P_1)=T(P_2)$. Therefore, $T$ is well-defined.

\textbf{Claim 2.} $T$ is injective.

Let $P_1$ and $P_2$ in $\mathcal{P}$ such that $P_1 \neq P_2$. 

If $E(P_1) \neq E(P_2)$, then $V(T(P_1)) \neq V(T(P_2))$ and, so $T(P_1) \neq T(P_2)$. Otherwise, since $P_1 \neq P_2$, then there exists $\{e_i,e_{i+1}\} \subseteq E(P_1) = E(P_2)$ such that $e_i$ is the edge preceding $e_{i+1}$ in $P_1$ and $e_i$ is not the edge preceding $e_{i+1}$ in $P_2$. Therefore, $T(P_1) \neq T(P_2)$ and $T$ is injective.

\textbf{Claim 3.} $T$ is surjective.

Let $C$ be a cycle in $\mathcal{C}$. Since $C$ alternate edges between $E(L_2^H(G))\setminus M_J$ and $M_J$, $C$ must be of the form $C=(f(x_1,e_1),f(y_1,e_1),\ldots,f(x_q,e_q),$ $f(y_q,e_q), f(x_1,e_1))$, where $e_i=x_iy_i \in E(G)$.

It follows from the definition of $T$ that $T(P)=C$, where $P$ be the closed dynamic $H$-trail obtained by applying the Lemma \ref{lemma:Cycle to H-trail} to the cycle $C$. Hence, $T$ is surjective.

Therefore, $T$ is a bijection.
\end{proof}

\begin{theorem}\label{theo:2}
Let $G$ be an $H$-colored multigraph and $M_J$ be the joint matching of $L_2^H(G)$. There exists a perfect matching $M$ in $L_2^H(G)\setminus M_J$ if and only if there exists a partition of the edges of $G$ into closed dynamic $H$-trails. (Notice that some of the closed dynamic $H$-trails can be of the form $(x,e_0,\ldots,e_k,y)$, where $k\geq 1$). 
\end{theorem}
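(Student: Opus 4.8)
The plan is to use the joint matching $M_J$ as a fixed ``skeleton'' and to exploit Lemma~\ref{lemma:Union-Perfect} together with the correspondence of Theorem~\ref{theo:Bijection}. The guiding observation is that the vertices $f(x,e)$ of $L_2^H(G)$ are indexed by incidences of $G$, while the edges of $M_J$ are in one-to-one correspondence with the edges of $G$; thus a decomposition of $V(L_2^H(G))$ into $M_J$-alternating cycles ought to translate into a decomposition of $E(G)$ into closed dynamic $H$-trails, and conversely.

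For the forward direction I would start from a perfect matching $M$ of $L_2^H(G)\setminus M_J$. Since $M \subseteq E(L_2^H(G))\setminus M_J$, the matchings $M$ and $M_J$ are disjoint perfect matchings of $L_2^H(G)$, so Lemma~\ref{lemma:Union-Perfect} partitions $V(L_2^H(G))$ into even cycles each of which alternates edges between $M$ and $M_J$, and hence between $E(L_2^H(G))\setminus M_J$ and $M_J$. Each such cycle is then an element of the set $\mathcal{C}$ of Theorem~\ref{theo:Bijection}, and applying Lemma~\ref{lemma:Cycle to H-trail} yields a closed dynamic $H$-trail. The key bookkeeping step is to check that these trails partition $E(G)$: an edge $e=xy$ of $G$ appears in the trail coming from a cycle $C$ exactly when the $M_J$-edge $f(x,e)f(y,e)$ lies on $C$, and since the cycle decomposition uses every edge of $M_J$ exactly once (as $M_J$ is a perfect matching), each edge of $G$ belongs to precisely one trail.

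For the converse I would take a partition of $E(G)$ into closed dynamic $H$-trails $P_1,\dots,P_r$ and send each $P_i$ to its cycle $C_i$ via Lemma~\ref{lemma:H-trail to cycle}. These cycles are pairwise vertex-disjoint and together span $V(L_2^H(G))$: a vertex $f(x,e)$ lies on $C_i$ if and only if the edge $e$ belongs to $P_i$, so disjointness and coverage of the family $\{C_i\}$ are precisely the statement that the $P_i$ partition $E(G)$. Because each $C_i$ alternates between $M_J$ and $E(L_2^H(G))\setminus M_J$, its non-$M_J$ edges form a perfect matching of $V(C_i)$; taking the union over all $i$ of these non-$M_J$ edge sets produces a perfect matching $M$ of $L_2^H(G)$ with $M \cap M_J = \emptyset$, i.e.\ a perfect matching of $L_2^H(G)\setminus M_J$.

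I expect the only delicate point to be the incidence-to-edge correspondence: one must argue carefully that vertex-disjointness and the spanning property of the cycle family is \emph{equivalent} to the edge-partition property of the trail family, and not merely implied in one direction. The degenerate trails of the form $(x,e_0,\dots,e_k,y)$ with $k\ge 1$ (arising from cycles whose $M_J$-edges join parallel edges of $G$) should require no separate treatment, since Lemma~\ref{lemma:Cycle to H-trail} already delivers them as legitimate closed dynamic $H$-trails.
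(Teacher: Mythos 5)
Your proposal is correct and follows essentially the same route as the paper: both directions rest on Lemma~\ref{lemma:Union-Perfect} to split the union of the two disjoint perfect matchings into $M_J$-alternating cycles, on Lemmas~\ref{lemma:H-trail to cycle} and~\ref{lemma:Cycle to H-trail} (equivalently, the bijection $T$ of Theorem~\ref{theo:Bijection}) to pass between those cycles and closed dynamic $H$-trails, and on the incidence-to-edge bookkeeping (each edge $e=xy$ of $G$ lies in the trail of a cycle exactly when $f(x,e)f(y,e)$ lies on that cycle) to show that vertex-disjointness and spanning of the cycle family correspond to the trails partitioning $E(G)$. The paper packages this bookkeeping as its Claims 1--5, but the content matches yours.
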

\begin{proof}
Let $G$ be an $H$-colored multigraph and $M_J$ be the joint matching of $L_2^H(G)$.

Suppose that there exists a perfect matching $M$ in $L_2^H(G)\setminus M_J$. 

It follows by Lemma \ref{lemma:Union-Perfect}, that there exists a partition of the vertices of $L_2^H(G)$ into vertex-disjoint cycles, say $\mathfrak{C}=\{C_1,\ldots,C_n\}$. Moreover, each cycle in $\mathfrak{C}$ alternate edges between $M_J$ and $M$.

Let $\mathfrak{P}=\{P_1,\ldots, P_n\}$ be the set of closed dynamic $H$-trails in $G$ such that $T(P_i)=C_i$, for every $i \in \{1,\ldots,n\}$.\vspace{1em}

\textbf{Claim 1} $E(P_i) \cap E(P_j) = \emptyset$, for every $\{i,j\} \subseteq \{1,\ldots, n\}$.

Proceeding by contradiction, suppose that there exists a pair of distinct elements, $i$ and $j$ in $\{1,\ldots,n\}$, such that $E(P_i) \cap E(P_j) \neq \emptyset$.

Let $e=xy \in E(P_i) \cap E(P_j)$, it follows from the definition of $T$ that $f(x,e)$ and $f(y,e)$ are vertices in $V(C_i)\cap V(C_j)$, a contradiction.\vspace{1em}

\textbf{Claim 2} $\bigcup_{i=1}^n E(P_i)=E(G)$.

By the definition of $T$, we have that $\bigcup_{i=1}^n \; E(P_i) \subseteq E(G)$.  

On the other hand, let $e \in E(G)$ join $x$ and $y$. Since, $\mathfrak{C}$ is a partition of the vertices of $L_2^H(G)$, then there exists a cycle in $\mathfrak{C}$, say $C_j$, such that $f(x,e)f(y,e) \in E(C_j)$. It follows from the construction of $P_j$ that $e \in E(P_j)$. Therefore, $e \in \bigcup_{i=1}^n E(P_i)$ and $E(G) \subseteq \bigcup_{i=1}^n E(P_i)$.\vspace{1em}

Therefore, $\mathfrak{P}=\{P_1,\ldots, P_n\}$ is a partition of the edges of $G$ into closed dynamic $H$-trails.\vspace{1em}

Conversely, suppose that there exists a partition of the edges of $G$ into closed dynamic $H$-trails, say $\mathfrak{P}=\{P_1,\ldots, P_k\}$.

Let $\mathfrak{C}=\{C_1, \ldots, C_k\}$ a set of cycles in $L_2^H(G)$ such that $T(P_i)=C_i$, for every $i \in \{1,\ldots, k\}$.

\textbf{Claim 3} If $i \neq j$, then $V(C_i) \cap V(C_j) = \emptyset$.

Suppose, to the contrary, that there exists $i$ and $j$ in $\{1,\ldots,k\}$, such that $V(C_i) \cap V(C_j) \neq \emptyset$. Let $f(x,e) \in V(C_i) \cap V(C_j)$, for some $e \in E(G)$ and $x \in V(G)$, such that $e$ is incident with $x$.

It follows from the construction of $C_i$ and $C_j$ that $e \in E(P_i)$ and $e \in E(P_j)$, a contradiction.

\textbf{Claim 4} $\bigcup_{i=1}^k V(C_i)=V(L_2^H(G))$.

By the construction of $C_i$, we have that $\bigcup_{i=1}^k V(C_i) \subseteq V(L_2^H(G))$.

Let $f(x,e) \in V(L_2^H(G))$, with $e \in E(G)$ and $x \in V(G)$, such that $e$ is incident with $x$. 

Since $\mathfrak{P}$ is a partition of the edges of $G$, then there exists $P_i \in \mathfrak{P}$ such that $e \in E(P_i)$. It follows from the construction of $C_i$ that $f(x,e) \in V(C_i) \subseteq \bigcup_{j=1}^k V(C_j)$. Hence, $V(L_2^H(G)) \subseteq \bigcup_{j=1}^k V(C_j)$. 

Therefore, $\bigcup_{i=1}^k V(C_i)=V(L_2^H(G))$.\vspace{1em}

\textbf{Claim 5} $M= \bigcup_{i=1}^k E(C_i) \setminus M_J$ is a perfect matching in $L_2^H(G)$. 

Recall $M_J=\{f(x,e)f(y,e) \in E(L_2^H(G)) : e=xy \in E(G)\}$.

It follows from the construction of $C_i$ and claims 3 and 4 that $M$ is a perfect matching in $L_2^H(G)$.\vspace{1em}

Therefore, $M$ is a perfect matching in $L_2^H(G)$, such that $M \cap M_J = \emptyset$.
\end{proof}

\begin{theorem}\label{theo:3}
Let $G$ be an $H$-colored multigraph. Then:
\renewcommand{\labelenumi}{\alph{enumi}.}
\begin{enumerate}
\item If $G$ has a closed Euler dynamic $H$-trail, then $L_n^H(G)$ is hamiltonian, for every $n \geq 2$.
\item If $L_n^H(G)$ is hamiltonian, for some $n\geq 3$, then $G$ has a closed Euler dynamic $H$-trail.
\item $G$ has a closed Euler dynamic $H$-trail if and only if $L_n^H(G)$ is hamiltonian, for every $n \geq 3$.
\end{enumerate}
\end{theorem}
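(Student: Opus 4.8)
The plan is to route everything through the correspondence of Theorem \ref{theo:Bijection} between closed dynamic $H$-trails in $G$ and alternating cycles in $L_2^H(G)$, and then to relate $L_2^H(G)$ and $L_n^H(G)$ for $n\ge 3$ via the subdivided edges. Observe first that a closed \emph{Euler} dynamic $H$-trail is precisely a closed dynamic $H$-trail $P$ with $E(P)=E(G)$.

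For part (a), I would feed such a $P$ into the map of Lemma \ref{lemma:H-trail to cycle}, obtaining an alternating cycle $C$ in $L_2^H(G)$ that, because $E(P)=E(G)$, contains both copies $f(x,e)$ and $f(y,e)$ of every edge $e=xy$; as $|V(L_2^H(G))|=2q$, this $C$ is a Hamilton cycle alternating between $M_J$ and $E(L_2^H(G))\setminus M_J$, and in particular it uses every edge of $M_J$. To reach $L_n^H(G)$ for arbitrary $n\ge 2$, recall that $L_n^H(G)$ is obtained from $L_2^H(G)$ by subdividing each matching edge $f(x,e)f(y,e)\in M_J$ into a path with $n-2$ internal vertices. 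Substituting each such path for its $M_J$-edge inside $C$ yields a closed spanning walk of $L_n^H(G)$: it visits every old vertex since $C$ is Hamiltonian, and every new internal vertex since $C$ uses all of $M_J$. Hence $L_n^H(G)$ is hamiltonian.

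Part (b) is where the hypothesis $n\ge 3$ is used, and it is the step I expect to be the main obstacle. Let $\Gamma$ be a Hamilton cycle of $L_n^H(G)$ with $n\ge 3$. The key structural observation is that each subdivided edge now carries at least one internal vertex, and every such internal vertex has degree exactly $2$ in $L_n^H(G)$; since a Hamilton cycle must use both edges incident to a degree-$2$ vertex, $\Gamma$ is forced to traverse each subdivision path in full. Contracting every such path back to the edge $f(x,e)f(y,e)$ therefore converts $\Gamma$ into a cycle $C$ of $L_2^H(G)$ that visits all $2q$ vertices and uses every edge of $M_J$; because $M_J$ is a perfect matching, exactly one of the two $C$-edges at each vertex lies in $M_J$, so $C$ alternates between $M_J$ and its complement. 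Applying Lemma \ref{lemma:Cycle to H-trail} (equivalently, the inverse of the bijection in Theorem \ref{theo:Bijection}) to $C$ produces a closed dynamic $H$-trail $P$ in $G$; since $C$ is Hamiltonian it passes through the two vertices attached to each edge of $G$, forcing $E(P)=E(G)$, so $P$ is the desired closed Euler dynamic $H$-trail.

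Finally, part (c) follows at once: the forward implication specializes part (a) to $n\ge 3$, and the converse is part (b) applied to a single value (say $n=3$). I would note that the gap between ``some $n\ge 3$'' and ``every $n\ge 2$'' is essential, since when $n=2$ there are no internal vertices to force the alternation, so a Hamilton cycle of $L_2^H(G)$ need not use all of $M_J$ and hence need not arise from a single closed Euler dynamic $H$-trail, paralleling the classical distinction between supereulerian and eulerian graphs.
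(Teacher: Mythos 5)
Your proposal is correct and follows essentially the same route as the paper: part (a) via the map of Lemma \ref{lemma:H-trail to cycle} (equivalently Theorem \ref{theo:Bijection}) followed by subdividing the $M_J$-edges, and part (b) by using the degree-$2$ internal vertices to force a Hamilton cycle of $L_n^H(G)$, $n\ge 3$, to traverse each subdivision path in full, contracting back to $L_2^H(G)$, and applying $T^{-1}$. In fact you are somewhat more careful than the paper, which leaves implicit both the verification that the contracted cycle alternates with respect to $M_J$ and the degree-$2$ argument itself.
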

\begin{proof}
Let $G$ be an $H$-colored multigraph.

a. It follows from Theorems \ref{theo:Bijection} and \ref{theo:2} and definition of $L_n^H(G)$.

b. Suppose that there exist $n \geq 3$, such that $L_n^H(G)$is hamiltonian. Let $C'$ be a  hamiltonian cycle in $L_n^H(G)$.

Then, every path from $f(x,e)$ to $f(y,e)$ is in $V(C')$ in some order, for every $e=xy$ in $E(G)$, by the definition of $L_n^H(G)$. Hence, we replace the path for the edge $f(x,e)f(y,e)$ to $C'$, and we get a Hamilton cycle in $L_2^H(G)$, say $C$.
 
Since $C$ is a Hamilton cycle, then $P=T^{-1}(C)$ is a closed Euler dynamic $H$-trail in $G$, by Theorem \ref{theo:2}.

c. It follows from a and b.
\end{proof}

In Theorem \ref{theo:3}, we cannot change $n \geq 3$ by $n \geq 2$ because there are $H$-colored multigraphs without closed Euler dynamic $H$-trail and $L_2^H(G)$ is hamiltonian, see Figure \ref{fig:Conter}.

\begin{figure}[tpb]
\centering 
	\subfigure[$G$ is an $H$-colored multigraph]
	{\scalebox{1}{\begin{tikzpicture}
	\tikzset{every loop/.style={min distance=15mm,in=120,out=60,looseness=10}}
	\tikzstyle{vertex}=[circle, draw=black] 
		\node[](G) at (0,2.5){\huge{$G$}};		
		\node[vertex](v1) at (1,2.5) {};
		\node[above=0mm of v1](Et.u) {$v_1$};
		\node[vertex](v2) at (3,2.5) {};
		\node[above=0mm of v2](Et.u) {$v_2$};
		\node[vertex](v3) at (5,2.5) {};
		\node[above=0mm of v3](Et.u) {$v_3$};
		\node[vertex](v4) at (1,0) {};
		\node[below=0mm of v4](Et.u) {$v_4$};
		\node[vertex](v5) at (3,0) {};
		\node[below=0mm of v5](Et.u) {$v_5$};
		\node[vertex](v6) at (5,0) {};
		\node[below=0mm of v6](Et.u) {$v_6$};
		
		\draw[red,dashed] (v1) -- node[below] {\textcolor{black}{$e_1$}} ++(v2);
		\draw[red,dashed] (v1) -- node[right] {\textcolor{black}{$e_2$}} ++(v4);
		\draw[blue] (v2) -- node[below] {\textcolor{black}{$e_6$}} ++(v3);
		\draw[blue] (v3) -- node[left] {\textcolor{black}{$e_7$}} ++(v6);
		\draw[red,dashed] (v4) -- node[above] {\textcolor{black}{$e_3$}} ++(v5);
		\draw[blue] (v5) -- node[above] {\textcolor{black}{$e_8$}} ++(v6);
		\path (v2) 	edge [right,bend left,blue] node {\textcolor{black}												{$e_5$}} (v5)
					edge [left,bend right,red,,dashed] node {\textcolor{black}												{$e_4$}} (v5);
		
		\node[](H) at (0,-1){\huge{$H$}};
		\node[vertex,fill=blue](G) at (1.5,-2.5) {};
		\node[below=1mm of G]{$G$};		
		\node[vertex,fill=red](R) at (3.5,-2.5) {};
		\node[below=1mm of R] {$R$};
		
		\path	(G) edge [loop above] node {} ()
				(R) edge [loop above,dashed,thick] node {} ();		
		
	\end{tikzpicture}}}
	\hfill
	\subfigure[$L_2^H(G)$]{\scalebox{1}{\begin{tikzpicture}
	\tikzset{every loop/.style={min distance=15mm,in=120,out=60,looseness=10}}
	\tikzstyle{vertex}=[circle, draw=black] 
		
		\node[](L2) at (1,6){\Large{$L_2^H(G)$}};
		\node[vertex](e1v1) at (2,5) {};
		\node[vertex](e1v2) at (3.5,5) {};
		\node[vertex](e2v1) at (1,4) {};
		\node[vertex](e2v4) at (1,2) {};
		\node[vertex](e3v4) at (2,1) {};
		\node[vertex](e3v5) at (3.5,1) {};
		\node[vertex](e4v2) at (4.5,4) {};
		\node[vertex](e4v5) at (4.5,2) {};
		\node[vertex](e5v2) at (6,4) {};
		\node[vertex](e5v5) at (6,2) {};
		\node[vertex](e6v2) at (7,5) {};
		\node[vertex](e6v3) at (8.5,5) {};
		\node[vertex](e7v3) at (9.5,4) {};
		\node[vertex](e7v6) at (9.5,2) {};
		\node[vertex](e8v5) at (7,1) {};
		\node[vertex](e8v6) at (8.5,1) {};
		\node[](b) at (1,0.5) {};

		\draw[] (e1v1) -- (e1v2);
		\draw[] (e1v1) -- (e2v1);
		\draw[] (e2v1) -- (e2v4);
		\draw[] (e2v4) -- (e3v4);
		\draw[] (e3v4) -- (e3v5);
		\draw[] (e3v5) -- (e4v5);
		\draw[] (e4v5) -- (e4v2);
		\draw[] (e4v5) -- (e5v2);
		\draw[] (e4v2) -- (e5v5);
		\draw[] (e4v2) -- (e1v2);
		\draw[] (e5v2) -- (e5v5);
		\draw[] (e5v2) -- (e6v2);
		\draw[] (e6v2) -- (e6v3);
		\draw[] (e6v3) -- (e7v3);
		\draw[] (e7v3) -- (e7v6);
		\draw[] (e7v6) -- (e8v6);
		\draw[] (e8v6) -- (e8v5);
		\draw[] (e8v5) -- (e5v5);
		
	\end{tikzpicture}}}
	\caption{The graph $L_2^H(G)$ is hamiltonian but there is no closed Euler dynamic $H$-trail in $G$}
\label{fig:Conter}
\end{figure}
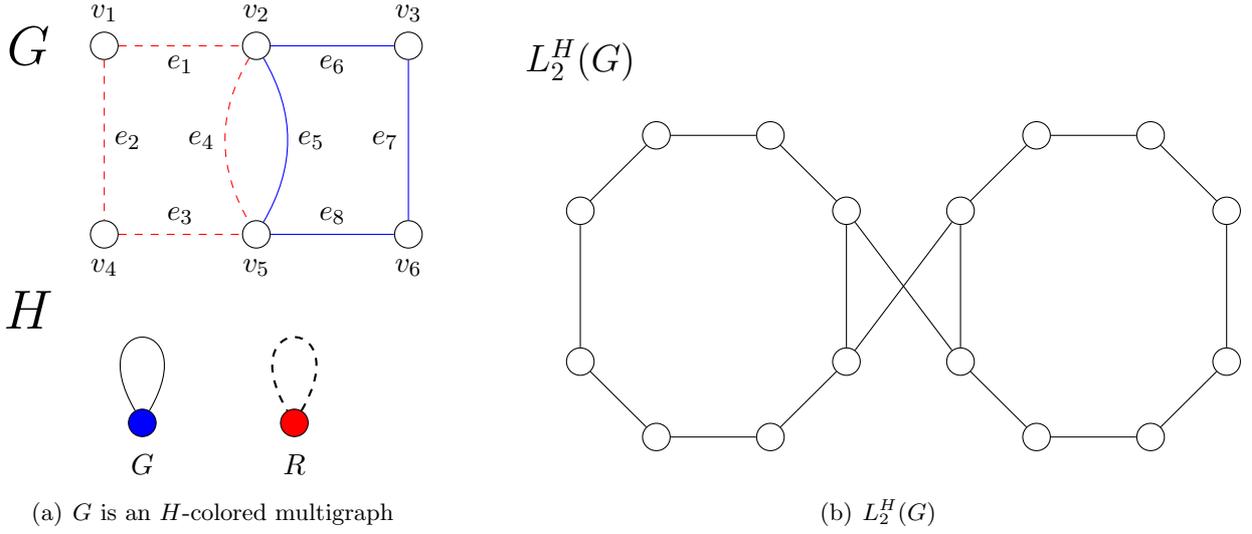

Recall that $E_{xy}$ is the set of all the edges with end vertices $x$ and $y$.

\begin{definition}
Let $G$ be an $H$-colored multigraph and $M$ be a matching in $L_2^H(G)$. For each $x$ in $V(G)$, we will say that $M_x=\{f(x,e)f(x,g) \in M\}$, i.e., $M_x$ consists of the edges in $G_x$ that are in $M$. And, for every pair of vertices $x$ and $y$ in $V(G)$, such that $E_{xy} \neq \emptyset$, we will define the following sets: $A_{xy}^M = \{e \in E_{xy} : f(x,e)$ is $M_x$-saturate and $f(y,e)$ is not $M_y$-saturate$\}$; $B_{xy}^M = \{e \in E_{xy} : f(y,e)$ is $M_y$-saturate and $f(x,e)$ is not $M_x$-saturate$\}$; $C_{xy}^M = \{e \in E_{xy} : f(x,e)$ is $M_x$-saturate and $f(y,e)$ is $M_y$-saturate$\}$; and $D_{xy}^M = \{e \in E_{xy} : f(x,e)$ is not $M_x$-saturate and $f(y,e)$ is not $M_y$-saturate$\}.$
\end{definition}

\begin{theorem}\label{theo:4}
Let $G$ be an $H$-colored multigraph. There exists a partition of the edges of $G$ into closed dynamic $H$-trail, none of them in the form $(x,a_1,\ldots,a_l,y)$, $l \geq 2$, if and only if there exists a non empty matching, say $M$, in $L_2^H(G)$, such that for each set $E_{xy} \neq \emptyset$ one of the following conditions hold: a) $C_{xy}^M=E_{xy}$; b) $\vert C_{xy}^M \vert < \vert E_{xy}\vert$ and $1 \leq \vert A_{xy}^M \vert = \vert B_{xy}^M \vert$.
\end{theorem}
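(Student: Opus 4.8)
The plan is to run everything through the dictionary of Theorems~\ref{theo:Bijection} and \ref{theo:2}, after separating the non-$M_J$ edges of $L_2^H(G)$ into two kinds: the \emph{color-transition} edges, which lie inside some $G_x$ (these are the edges counted by $M_x$), and the \emph{lane-change} edges, which join copies of parallel edges between two different vertices. The qualitative fact I would isolate first is that, under the bijection $T$, a closed dynamic $H$-trail has the excluded form $(x,a_1,\dots,a_l,y)$ with $l\ge 2$ exactly when its cycle in $L_2^H(G)$ uses no color-transition edge: such a cycle cannot leave one class $E_{xy}$ (lane-change and joint edges stay inside it), so the trail is a single parallel block. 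Consequently a partition of $E(G)$ into closed dynamic $H$-trails has no excluded member iff every cycle in the associated family meets a color-transition edge. Throughout I assume $E(G)\neq\emptyset$, since otherwise no non-empty matching exists and the statement is vacuous.

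For the forward implication I would take a partition into non-excluded trails, pass to the vertex-disjoint cycles $C_i=T(P_i)$ covering $V(L_2^H(G))$ given by Theorem~\ref{theo:2}, and set $M$ to be the collection of all color-transition edges used by these cycles. As each cycle vertex meets exactly one non-$M_J$ edge, $M$ is a matching, and it is non-empty by the fact above. Now $f(x,e)$ is $M_x$-saturated precisely when the trail makes a color transition at the $x$-end of $e$ and is unsaturated when it makes a lane change there. Restricting to a fixed class $E_{xy}$ and reading off the block decomposition: a length-one block puts its edge into $C_{xy}^M$, while a block of length $\ge 2$ puts its entry edge and its exit edge into $A_{xy}^M$ and $B_{xy}^M$ (one each) and the interior edges into $D_{xy}^M$. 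Here I would use that $M_J$ together with the lane changes forms a bipartite graph between the $x$-copies and $y$-copies of $E_{xy}$, so each block segment, having odd length, has its two endpoints at opposite vertices; hence exactly one end is an $A$-edge and the other a $B$-edge. Therefore $|A_{xy}^M|=|B_{xy}^M|$ equals the number of long blocks, giving condition (a) when this is $0$ and condition (b) otherwise.

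For the converse I would first observe that the hypotheses only mention $M_x$-saturation, so I may delete from $M$ every edge that is not a color transition; the result is still a matching satisfying (a)/(b) and still non-empty, because if some class were entirely $D_{xy}^M$ neither condition could hold. For each class $E_{xy}$ I then must join the $M$-unsaturated $x$-copies $\{f(x,e):e\in B_{xy}^M\cup D_{xy}^M\}$ to the $M$-unsaturated $y$-copies $\{f(y,e):e\in A_{xy}^M\cup D_{xy}^M\}$ by lane-change edges. These two sets have the same cardinality because $|A_{xy}^M|=|B_{xy}^M|$, and since every $x$-copy is joined to every $y$-copy I am free to choose the pairing; I would string all of $D_{xy}^M$ into one path running from a single $A$-copy to a single $B$-copy and match the remaining $A$- and $B$-copies in pairs. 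Writing $P$ for the union of these lane-change matchings over all classes, $M^{*}:=M\cup P$ is a perfect matching of $L_2^H(G)$ disjoint from $M_J$, so Theorem~\ref{theo:2} converts it into a partition of $E(G)$ into closed dynamic $H$-trails.

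It remains to check non-exclusion, and this is where the construction of $P$ pays off. An excluded trail would yield a cycle of $M^{*}\cup M_J$ avoiding all color transitions, hence a cycle of $M_J\cup P$; such a cycle stays inside one class $E_{xy}$ and uses only $M_J$-edges of $D_{xy}^M$ together with edges of $P$, so it would be a cycle of $M_J\cup P$ among the unsaturated copies of that class. But $P$ was chosen so that on each class $M_J\cup P$ is a disjoint union of paths, a contradiction; hence every member of the partition is non-excluded. The main obstacle is exactly this last design step: one must realize the prescribed saturation pattern (possible because $|A_{xy}^M|=|B_{xy}^M|$) while never closing a pure lane-change cycle, and it is the clause $1\le|A_{xy}^M|$ in condition (b) that guarantees an $A/B$ anchor whenever $D_{xy}^M$ is nonempty, making the acyclic stringing possible.
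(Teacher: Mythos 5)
Your proposal is correct and takes essentially the same route as the paper: the forward direction reads the sets $A_{xy}^M$, $B_{xy}^M$, $C_{xy}^M$, $D_{xy}^M$ off the blocks that each (non-excluded) trail traces inside a parallel class $E_{xy}$, and the converse prunes $M$ to $\bigcup_{x}M_x$ and then adds lane-change edges that string $D_{xy}^M$ into a path anchored at one $A$-copy and one $B$-copy before invoking Theorem~\ref{theo:2}. If anything, your write-up is more careful at two points: you treat uniformly the case where the given matching is already perfect in $L_2^H(G)\setminus M_J$ (the paper disposes of that case by citing Theorem~\ref{theo:2} alone, which does not by itself rule out trails of the forbidden form $(x,a_1,\ldots,a_l,y)$), and you spell out the argument---that $M_J\cup P$ restricted to each class is a disjoint union of paths---which the paper compresses into ``by the construction of $N$''.
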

\begin{proof}
Let $G$ be an $H$-colored multigraph and $M_J$ be the joint matching of $L_2^H(G)$.

Suppose that $\mathfrak{P}=\{P_1,\ldots,P_k\}$ is a partition of the edges of $G$ into closed dynamic $H$-trails, such that none of them have the form $(x,a_1,\ldots,a_l,y)$, $l \geq 2$.

By Theorem \ref{theo:2}, there exists a perfect matching in $L_2^H(G)\setminus M_J$. Let $M$ be the perfect matching obtained as in the proof of Theorem \ref{theo:2}, i.e, $M= \bigcup_{i=1}^k E( T(P_i)) \setminus M_J$.

Let $x$ and $y$ be a pair of vertices in $G$, such that $E_{xy}=\{e_1,\ldots,e_q\}$, with $q \geq 1$. Since $\mathfrak{P}$ is a partition of the edges of $G$ into closed dynamic $H$-trails, then there exists $P_i \in \mathfrak{P}$ such that $E_{xy} \cap E(P_i) \neq \emptyset$, say $P_i=P_1$.

Since $P_1$ is not of the form $(x,a_1,\ldots,a_l,y)$, $l \geq 2$, and $E_{xy} \cap E(P_1) \neq \emptyset$, then $P_1=(\ldots, f_0,x,e_{t_1},\ldots,e_{t_{p_1}},$ $y,f_1,\ldots)$, where $p_1 \leq q$ and $\{f_0,f_1\} \subset E(G)$. (Recall that $M_x$ consists of the edges in $G_x$ that are in $M$).

If $p_1=1$, then $P_1=(\ldots,f_0,x,e_{t_1},y,f_1,\ldots)$. Hence, $f(x,f_0)f(x,e_{t_1})$ and $f(y,e_{t_1})f(y,f_1)$ are in $M$. 

Therefore, $f(x,f_0)f(x,e_{t_1}) \in M_x$, $f(y,e_{t_1})f(y,f_1) \in M_y$ and $e_{t_1} \in C_{xy}^M$.

On the other hand, if $p_1 \geq 2$, then $f(x,f_0)f(x,e_{t_1})$, $f(y,e_{t_{p_1}})f(y,f_1)$ and $f(y,e_{t_i})f(x,e_{t_{i+1}})$ are in $M$, for every $i \in \{1,\ldots,p_1-1\}$, see figure \ref{fig:case2}. Hence, $e_{t_1} \in A_{xy}^M$, $e_{t_{p_1}} \in B_{xy}^M$ and $e_{t_i} \in D_{xy}^M$, for every $i \in \{2,\ldots,p_1-1\}$.

\begin{figure}[tpb]
\centering 
\subfigure[$P_1=(\ldots,f_0,x,e_1,\ldots,e_{p_1},y,f_1,\ldots)$]
	{\scalebox{1}{\begin{tikzpicture}
	\tikzset{every loop/.style={min distance=15mm,in=120,out=60,looseness=10}}
	\tikzstyle{vertex}=[circle, draw=black] 
		
		\node[vertex] (x) at (1.5,0) {};
		\node[below=1mm of x] {$x$};
		\node[vertex] (y) at (4.5,0) {};
		\node[below=1mm of y] {$y$};
		\node[vertex] (x0) at (0,0) {};
		\node[vertex] (y0) at (6,0) {};
		\node[] (f2) at (3,0.4) {};
		\node[] (f3) at (3,-0.4) {};
		
		\path	(x)	edge [bend left=60,thick,above] node {$e_1$} (y)
					edge [bend left=20,thick,above] node {$e_2$} (y)
					edge [bend right=20,thick,below] node {$e_{p_1-1}$} (y)
					edge [bend right=60,thick,below] node {$e_{p_1}$} (y)
				(x0)edge [thick,below] node {$f_0$} (x)
				(y)edge [thick,below] node {$f_1$} (y0);
		\draw[dotted, very thick] (f2) -- (f3);
	\end{tikzpicture}}}
	\hfill
	\subfigure[Cycle in $L_2^H(G)$ obtained from $P_1$.\label{Subfig:Caso1}]
	{\scalebox{1}{\begin{tikzpicture}
	\tikzset{every loop/.style={min distance=15mm,in=120,out=60,looseness=10}}
	\tikzstyle{vertex}=[circle, draw=black] 
		
		\node[vertex] (ex1) at (0,0) {};
		\node[below=0mm of ex1] {$f(x,e_1)$};
		\node[vertex] (ey1) at (0,2) {};
		\node[above=0mm of ey1] {$f(y,e_1)$};
				
		\node[vertex] (ex2) at (1.5,0) {};
		\node[below=0mm of ex2] {$f(x,e_2)$};
		\node[vertex] (ey2) at (1.5,2) {};
		\node[above=0mm of ey2] {$f(y,e_2)$};

		\node[vertex] (exk) at (3.5,0) {};
		\node[below=0mm of exk] {$f(x,e_{p_1-1})$};
		\node[vertex] (eyk) at (3.5,2) {};
		\node[above=0mm of eyk] {$f(y,e_{p_1-1})$};

		\node[vertex] (fx1) at (5.2,0) {};
		\node[below=0mm of fx1] {$f(x,e_{p_1})$};
		\node[vertex] (fy1) at (5.2,2) {};
		\node[above=0mm of fy1] {$f(y,e_{p_1})$};

		\node[vertex] (fx0) at (-1.7,1) {};
		\node[below=1mm of fx0] {$f(x,f_0)$};
		\node[vertex] (fy0) at (6.5,1) {};
		\node[below=1mm of fy0] {$f(y,f_1)$};

		\node[] (g00) at (2,0) {};
		\node[] (g01) at (3,0) {};
		\node[] (g02) at (2.5,0) {};
		\node[] (g10) at (2,2) {};
		\node[] (g11) at (3,2) {};
		\node[] (g12) at (2.5,2) {};
				
		\path	(fx0)	edge [] node {} (ex1)
				(ey1)	edge [] node {} (ex2)
				(eyk)	edge [] node {} (fx1)
				(fy1)	edge [] node {} (fy0)
				(ey2)	edge [] node {} (g02)
				(exk)	edge [] node {} (g12);
		
		\path	(ex1)	edge [dashed] node {} (ey1)
				(ex2)	edge [dashed] node {} (ey2)
				(exk)	edge [dashed] node {} (eyk)
				(fx1)	edge [dashed] node {} (fy1);
		
		\draw[dotted] (g00) --(g01);
		\draw[dotted] (g10) --(g11);

	\end{tikzpicture}}}
	
	\caption{In b, the continued edges are in $M$ and the dashed edges are in $M_J$}
	\label{fig:case2}
\end{figure}
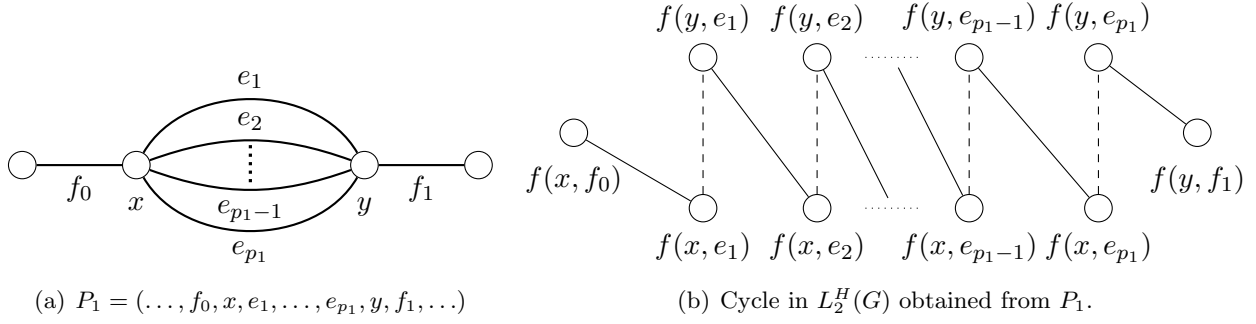

If $E_{xy}\setminus \{e_{t_1},\ldots,e_{t_{p_1}}\}= \emptyset$, then  $C_{xy}^M=E_{xy}$ (when $p_1=1$ occurs) or $\vert C_{xy}^M \vert < \vert E_{xy}\vert$ and $1 \leq \vert A_{xy}^M\vert = \vert B_{xy}^M \vert$ (when $p_1 \geq 2$). So, suppose that $E_1=E_{xy}\setminus \{e_{t_1},\ldots,e_{t_{p_1}}\} \neq \emptyset$, then there exists $P \in \mathfrak{P}$, such that $E(P)\cap E_1 \neq \emptyset$.

\noindent Since $P$ is not of the form $(x,a_1,\ldots,a_l,y)$, $l \geq 2$, and $E_1 \cap E(P) \neq \emptyset$, then $P=(\ldots, f_2,x,e_{s_1},\ldots,$ $e_{s_{p_2}},y,f_3,\ldots)$, where $\{f_2,f_3\} \subset E(G)$. We will consider two cases. 

If $p_2=1$, then $P=(\ldots,f_2,x,e_{s_1},y,f_3,\ldots)$. By the construction of $M$ we have that the edges $f(x,f_2)f(x,e_{s_1})$ and $f(y,e_{s_1})f(y,f_3)$ are in $M$. Thus, $f(x,f_2)f(x,e_{s_1}) \in M_x$, $f(y,e_{s_1})f(y,s_1) \in M_y$ and $e_{s_1} \in C_{xy}^M$.

On the other hand, if $p_2 \geq 2$, then $f(x,f_2)f(x,e_{s_1})$, $f(y,e_{s_{p_2}})f(y,f_3)$ and $f(y,e_{s_i})f(x,e_{s_{i+1}})$ are in $M$, for every $i \in \{1,\ldots,s_2-1\}$. Hence, $e_{s_1} \in A_{xy}^M$, $e_{s_{p_2}} \in B_{xy}^M$ and $e_{s_i} \in D_{xy}^M$, for every $i \in \{2,\ldots,p_2-1\}$.

If $E_2 = E_1\setminus \{e_{s_1},\ldots,e_{s_{p_2}}\}= \emptyset$, then  $C_{xy}^M=E_{xy}$ (when $p_1=p_2=1$) or $\vert C_{xy}^M\vert < \vert E_{xy}\vert$ and $1 \leq \vert A_{xy}^M\vert = \vert B_{xy}^M\vert$ (when $p_1 \geq 2$ or $p_2 \geq 2$). Otherwise, there exists $P' \in \mathfrak{P}$, such that $E_2 \cap E(P') \neq \emptyset$ and we can repeat this procedure and after a finite number of steps we obtain that $C_{xy}^M=E_{xy}$ or $\vert C_{xy}^M\vert < \vert E_{xy}\vert$ and $1 \leq \vert A_{xy}^M\vert = \vert B_{xy}^M\vert$.

Now, suppose that there exists $M$, a matching in $L_2^H(G)$, such that for every set $E_{xy}\neq \emptyset$, it holds that: $C_{xy}^M=E_{xy}$ or $\vert C_{xy}^M\vert < \vert E_{xy}\vert$ and $1 \leq \vert A_{xy}^M\vert = \vert B_{xy}^M\vert$.

If $M$ is a perfect matching in $L_2^H(G)\setminus M_J$, then there exists a partition of $E(G)$ into closed dynamic $H$-trails, by Theorem \ref{theo:2}. So, suppose that $M$ is not a perfect matching in $L_2^H(G)\setminus M_J$. Recall that $M_x=M\cap E(G_x)$, for every $x \in V(G)$.

Since $C_{xy}^M=E_{xy} \neq \emptyset$ or $1 \leq \vert A_{xy}^M\vert$, then for every $x \in V(G)$, $M_x \neq \emptyset$.

Let $N= \bigcup_{x\in V(G)} M_x$. Notice that $N$ is a matching in $L_2^H(G)\setminus M_J$, $A_{xy}^M = A_{xy}^N$, $B_{xy}^M = B_{xy}^N$, $C_{xy}^M = C_{xy}^N$ and $D_{xy}^M = D_{xy}^N$. So, for every set $E_{xy}\neq \emptyset$, it holds that: $C_{xy}^N = E_{xy}$ or $\vert C_{xy}^N\vert < \vert E_{xy}\vert$ and $1 \leq \vert A_{xy}^N\vert =\vert B_{xy}^N\vert$.

Let $x$ and $y$ be two different vertices in $V(G)$, such that $E_{xy}\neq \emptyset$. 

If $C_{xy}^N  = E_{xy}$, then $f(x,e)$ and $f(y,e)$ are $N$-saturate, for every $e \in E_{xy}$. So, we do not add any edge to $N$.

On the other hand, if $\vert C_{xy}^N\vert < \vert E_{xy}\vert $ and $1 \leq \vert A_{xy}^N\vert = \vert B_{xy}^N\vert$, then we will consider $G_{xy}$, the subgraph of $L_2^H(G)$ induced by the set $\{f(x,e) : e \in E_{xy}\setminus C_{xy}^N\} \cup \{f(y,e) : e \in E_{xy}\setminus C_{xy}^N\}$.

Since $\vert C_{xy}^N\vert < \vert E_{xy}\vert$, then $V(G_{xy}) \neq \emptyset$. Let $A_{xy}^N=\{e_1,\ldots, e_t\}$ and $B_{xy}^N=\{f_1,\ldots, f_t\}$, since $1 \leq \vert A_{xy}^N\vert = \vert B_{xy}^N\vert$ we have that $t \geq 1$.

If $D_{xy}^N  = \emptyset$, then we add the edges $f(y,e_i)f(x,f_i)$ to $N$, for every $i \in \{1, \ldots, t\}$. By the definition of the edges of $L_2^H(G)$ (point b) and the definition of $G_{xy}$, we have that $f(y,e_i)f(x,f_i) \in E(G_{xy})$. Moreover, $N\cup \{f(y,e_i)f(x,f_i) : i \in \{1, \ldots, t\} \}$ is still a matching.

On the other hand, when $D_{xy}^N  \neq \emptyset$, let $D_{xy}^N=\{g_1,\ldots,g_s\}$. We add the edges $f(y,e_1)f(x,g_1)$, $f(x,f_1)f(y,g_s)$, $f(y,e_i)f(x,f_i)$ and $f(y,g_j)f(x,g_{j+1})$ to $N$, for every $i \in \{2, \ldots, t\}$ and $j \in \{1,\ldots,s-1\}$.

By the definition of the edges of $L_2^H(G)$ and the definition of $G_{xy}$, we have that all the edges we add to $N$ are in $E(G_{xy})$ and $N$ is still a matching.

Thus, we add edges to $N$ in such a way every vertex in $G_{xy}$ is $N$-saturate.

So, we repeat the same process for $x$ and $y$ in $V(G)$ such that $E_{xy}\neq \emptyset$.

Therefore, $N$ is a perfect matching in $L_2^H(G)\setminus M_J$ and there exists a partition of $E(G)$ into closed dynamic $H$-trails, by Theorem \ref{theo:2} and, by the construction of $N$, none of them is of the form $(x,a_1,\ldots, a_l, y)$, with $l \geq 2$.
\end{proof}

The following result shows a condition on the graph $G_x$ that guarantees the existence of closed Euler dynamic $H$-trail.

\begin{theorem}\label{theo:5}
Let $G$ be a connected $H$-colored multigraph, such that $G_u$ is a complete $k_u$-partite graph for every $u \in V(G)$ and for some $k_u$ in $\mathbb{N}\setminus \{1\}$. Then $G$ has a closed Euler dynamic $H$-trail if and only if there exists a matching $M$ in $L_2^H(G)$, such that for every set $E_{xy} \neq \emptyset$ one of the following conditions hold: a) $C_{xy}^M=E_{xy}$; or b) $\vert C_{xy}^M\vert < \vert E_{xy}\vert$ and $1 \leq \vert A_{xy}^M\vert = \vert B_{xy}^M\vert$.
\end{theorem}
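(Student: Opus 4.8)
The plan is to exploit the fact that the matching condition on the right-hand side is \emph{verbatim} the condition occurring in Theorem \ref{theo:4}. By that theorem, the existence of such a matching $M$ is equivalent to the existence of a partition of $E(G)$ into closed dynamic $H$-trails, none of which has the forbidden form $(x,a_1,\ldots,a_l,y)$ with $l\geq 2$. So the whole statement reduces to proving that, under the present hypotheses, $G$ has a closed Euler dynamic $H$-trail if and only if $E(G)$ admits such a partition into non-forbidden trails. I would first dispose of the degenerate case $|V(G)|=2$: here $E(G)=E_{xy}$ consists of $l\geq 2$ parallel edges (the bound $k_u\geq 2$ forces $l\geq 2$), the single block $(x,a_1,\ldots,a_l,y)$ is already a closed Euler dynamic $H$-trail, and a direct choice of edges of $G_x$ and $G_y$ (mirrored when $l=2$, two distinct edges placed on the two sides when $l\geq 3$, both available because $G_x$ is complete $k_x$-partite with $k_x\geq 2$) produces an $M$ satisfying (a) or (b). Thus both sides hold and the equivalence is trivial; from now on I assume $|V(G)|\geq 3$, which guarantees that no Euler dynamic $H$-trail can have the forbidden form.

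For the forward implication, a closed Euler dynamic $H$-trail $P$ is a one-part partition of $E(G)$; since $|V(G)|\geq 3$, $P$ meets edges incident to at least three vertices and hence is not forbidden, so Theorem \ref{theo:4} immediately yields the matching $M$.

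The substance is the converse. Starting from a partition $\{P_1,\ldots,P_k\}$ of $E(G)$ into non-forbidden closed dynamic $H$-trails (Theorem \ref{theo:4}), I would pass to $L_2^H(G)$ via the bijection $T$ of Theorem \ref{theo:Bijection}, obtaining $k$ vertex-disjoint cycles $C_1,\ldots,C_k$ covering $V(L_2^H(G))$, each alternating between $M_J$ and a matching $M$ of $L_2^H(G)\setminus M_J$ (Lemma \ref{lemma:Union-Perfect}). The goal is to merge them into one Hamilton cycle, which by Theorem \ref{theo:3} corresponds to a closed Euler dynamic $H$-trail. I would induct on $k$. If $k\geq 2$, connectedness of $G$ forces two trails $P_i,P_j$ to share a vertex $u$. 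The decisive observation is that a non-forbidden trail has at least two ``blocks'', so (after normalizing type-b closures to type-a as in the remark following the definition) every block boundary is a genuine color transition; hence a visit to $u$ yields an edge of $M$ lying \emph{inside} the copy $G_u$. Writing these as $f(u,e_1)f(u,e_2)\in E(C_i)$ and $f(u,g_1)f(u,g_2)\in E(C_j)$ (four distinct vertices, as the cycles are disjoint), I would perform a $2$-swap inside $G_u$: delete both edges and reconnect by one of the cross-pairings $\{f(u,e_1)f(u,g_1),\,f(u,e_2)f(u,g_2)\}$ or $\{f(u,e_1)f(u,g_2),\,f(u,e_2)f(u,g_1)\}$. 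Either cross-pairing fuses $C_i$ and $C_j$ into one cycle; since the two new edges are again color transitions inside $G_u$, the result still alternates with $M_J$ and, via $T^{-1}$ and Lemma \ref{lemma:Cycle to H-trail}, corresponds to a non-forbidden closed dynamic $H$-trail on $E(P_i)\cup E(P_j)$ (non-forbidden precisely because it retains at least two intra-$G_u$ color transitions, hence at least two blocks). This lowers $k$ by one while preserving all invariants, and after $k-1$ merges a single cycle remains.

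The main obstacle is guaranteeing that one of the two cross-pairings really consists of edges of $L_2^H(G)$, and this is exactly where complete multipartiteness of $G_u$ enters. Let $e_1,e_2,g_1,g_2$ lie in the parts $a,b,c,d$ of $G_u$, with $a\neq b$ and $c\neq d$ because $e_1e_2$ and $g_1g_2$ are edges. A short case analysis shows that ``$a=d$ or $b=c$'' and ``$a=c$ or $b=d$'' cannot hold simultaneously without contradicting $a\neq b$ or $c\neq d$; hence at least one cross-pairing joins vertices in distinct parts and is therefore an edge of $G_u$. The remaining points are routine: checking that the swapped matching stays disjoint from $M_J$ (the new edges lie in $G_u$) and that the merged cycle is a single cycle of the required alternating type. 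Connectedness is used only to supply the shared vertex at each step, and the hypothesis $k_u\geq 2$—which forbids an edgeless $G_u$ on at least two vertices—is what guarantees the intra-$G_u$ transitions needed for every merge to exist.
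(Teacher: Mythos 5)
Your proposal is correct and is essentially the paper's own argument: both directions reduce to Theorem \ref{theo:4}, and the converse is settled by merging the resulting closed dynamic $H$-trails pairwise at shared vertices, where the complete $k_u$-partiteness of $G_u$ supplies one of the two cross-pairings --- your $2$-swap on alternating cycles in $L_2^H(G)$ is precisely the image, under the bijection $T$ of Theorem \ref{theo:Bijection}, of the paper's choice between $Q=P_1\cup P_2$ and $Q=P_1\cup P_2^{-1}$ performed directly in $G$. The only divergences are cosmetic: you handle the two-vertex degenerate case by exhibiting the matching $M$ directly, whereas the paper rearranges the forbidden-form Euler trail $(x,e_1,\ldots,e_k,y)$ into a two-block closed Euler dynamic $H$-trail (using the same $k_x\geq 2$ hypothesis) and then invokes Theorem \ref{theo:4}; and your passing citation of Theorem \ref{theo:3} for the final pull-back should instead be Theorem \ref{theo:Bijection}, since hamiltonicity of $L_2^H(G)$ alone does not suffice (Figure \ref{fig:Conter}) --- it is the alternation with $M_J$, which your swaps preserve, that makes the correspondence work.
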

\begin{proof}
Let $G$ be a connected $H$-colored multigraph, such that $G_u$ is a complete $k_u$-partite graph for every $u \in V(G)$ and for some $k_u$ in $\mathbb{N}\setminus \{1\}$.

Suppose that $G$ has a closed Euler dynamic $H$-trail, say $P$.

\textbf{Case 1.} $P$ is not of the form $(x,e_1,\ldots,e_k,y)$, with $k \geq 2$.

In this case there exists a matching, say  $M$, in $L_2^H(G)$, such that for every set $E_{xy} \neq \emptyset$ one of the following conditions hold: $C_{xy}^M=E_{xy}$ or $\vert C_{xy}^M\vert < \vert E_{xy}\vert$ and $1 \leq \vert A_{xy}^M\vert = \vert B_{xy}^M\vert$.

\textbf{Case 2.} $P=(x,e_1,\ldots,e_k,y)$, with $k\geq 2$. 

In this case, since $G_x$ is a complete $k_x$-partite graph, for some $k_x \geq 2$, then there exists two edges $e$ and $f$ in $E(G)=E(P)$, such that $c(e)c(f) \in E(H)$. Suppose, without loss of generality, that $c(e_1)c(e_2) \in E(H)$.

If $k=2$, then $P'=(x,e_1,y,e_2,x)$ is a closed Euler dynamic $H$-trail. Otherwise, since $G_x$ is $k_x$-partite graph, we have that $c(e_1)c(e_k) \in E(H)$ or $c(e_2)c(e_k) \in E(H)$. 

Hence, $P'=(x,e_1,y,e_2,\ldots,e_k,x)$ or $P'=(x,e_1,e_3,\ldots,e_k,y,e_2,x)$ is a closed Euler dynamic $H$-trail. By Theorem \ref{theo:4}, there exists a matching $M$ in $L_2^H(G)$, such that for every set $E_{xy} \neq \emptyset$, $C_{xy}^M=E_{xy}$ or $\vert C_{xy}^M\vert < \vert E_{xy}\vert$ and $1 \leq \vert A_{xy}^M\vert = \vert B_{xy}^M\vert$.\vspace{1em}

Conversely, suppose that there exists a matching $M$ in $L_2^H(G)$, such that for every set $E_{xy} \neq \emptyset$ one of the following conditions hold: $C_{xy}^M=E_{xy}$ or $\vert C_{xy}^M\vert < \vert E_{xy} \vert$ and $1 \leq \vert A_{xy}^M\vert = \vert B_{xy}^M\vert$.

It follows from Theorem \ref{theo:4} that there exists a partition of $E(G)$ into closed dynamic $H$-trails, none of them in the form $(x,e_1,\ldots,e_l,y)$, with $l \geq 2$, say $\mathcal{P}=\{P_1,\ldots,P_k\}$.

If $k=1$, then $E(P_1)=E(G)$ and $P_1$ is a closed Euler dynamic $H$-trail. Otherwise, since $G$ is connected it follows that there exists $P_i \in \mathcal{P}\setminus \{P_1\}$, say $P_2$, such that $V(P_1)\cap V(P_2)\neq \emptyset$. Since $P_1$ and $P_2$ are closed, we can suppose that $P_1=(x_0,e_1,\ldots,e_m,x_0)$, where $e_1=x_0x_1$ and $e_m=x_{k_1}x_0$, and $P_2=(y_0,f_1,\ldots,f_n,y_0)$, where $f_1=y_0y_1$ and $f_n=y_{k_2}y_0$, such that $x_0=y_0$. By the definition of dynamic $H$-trail, we have that $c(e_1)c(e_m)$ and $c(f_1)c(f_n)$ are in $E(H)$. Furthermore, $e_1,f_1,e_m$ and $f_n$ are incident with $x_0=y_0$. Hence, $e_1e_m$ and $f_1 f_n$ are in $E(G_{x_0})$.

We will consider the following: if $\{e_1,f_1\} \subseteq E_j^{x_0}$ or $\{e_m, f_n\} \subseteq E_j^{x_0}$, for some $j \in \{1,\ldots,k_{x_0}\}$, it follows that $e_1 f_n$ and $e_m f_1$ are in $E(G_{x_0})$, because $G_{x_0}$ is a complete $k_{x_0}$-partite graph. In other case, $e_1 f_1$ and $e_m f_n$ are in $E(G_{x_0})$.

By the definition of $G_{x_0}$, we have that $c(e_1)c(f_n)$ and $c(e_m)c(f_1)$ are edges of $H$ or $c(e_1)c(f_1)$ and $c(e_m)c(f_n)$ are edges of $H$.

If $c(e_1)c(f_n)$ and $c(e_m)c(f_1)$ are edges of $H$, we have that $Q_1=P_1 \cup P_2$ is a closed dynamic $H$-trail. And, if $c(e_1)c(f_1)$ and $c(e_m)c(f_n)$ are edges of $H$, we have that $Q_1=P_1 \cup P_2^{-1}$ is a closed dynamic $H$-trail.

If $E(Q_1)=E(G)$, then $Q_1$ is a closed Euler dynamic $H$-trail. Otherwise, since $G$ is connected it follows that there exists $P_i \in \mathcal{P}\setminus \{P_1,P_2\}$, say $P_3$, such that $V(Q_1)\cap V(P_3)\neq \emptyset$. Since $Q_1$ and $P_3$ are closed, we can suppose that $P_1=(v_1,a_1,\ldots,a_s,v_0)$, where $a_1=v_0v_1$ and $a_s=v_{k_3}v_0$, and $P_3=(u_0,g_1,\ldots,g_t,u_0)$, where $g_1=u_0u_1$ and $g_t=u_{k_3}u_0$, such that $v_0=u_0$. By the definition of dynamic $H$-trial, we have that $c(a_1)c(a_s)$ and $c(g_1)c(g_t)$ are in $E(H)$. Furthermore, $a_1,g_1,a_s$ and $g_t$ are incident with $u_0=v_0$. Hence, $a_1a_s$ and $g_1g_t$ are in $E(G_{v_0})$.

We will consider the following: if $\{a_1,g_1\} \subseteq E_j^{v_0}$ or $\{a_s, g_t\} \subseteq E_j^{v_0}$, for some $j \in \{1,\ldots,k_{v_0}\}$, it follows that $a_1 g_t$ and $a_s g_1$ are in $E(G_{v_0})$, because $G_{v_0}$ is a complete $k_{v_0}$-partite graph. In other case, $a_1 g_1$ and $a_s g_t$ are in $E(G_{v_0})$.

By the definition of $G_{v_0}$, we have that $c(a_1)c(g_t)$ and $c(a_s)c(g_1)$ are edges of $H$ or $c(a_1)c(g_1)$ and $c(a_s)c(g_t)$ are edges of $H$.

If $c(a_1)c(g_t)$ and $c(a_s)c(g_1)$ are edges of $H$, we have that $Q_2=Q_1 \cup P_3$ is a closed dynamic $H$-trail. And, if $c(a_1)c(g_1)$ and $c(a_s)c(g_t)$ are edges of $H$, we have that $Q_2=Q_1 \cup P_3^{-1}$ is a closed dynamic $H$-trail.

If $E(Q_2)=E(G)$, then $Q_2$ is a closed Euler dynamic $H$-trail. Otherwise, we repeat the procedure until we get the desired closed Euler dynamic $H$-trail.
\end{proof}

In Theorem \ref{theo:5} we cannot change the hypothesis ``$G_u$ is a complete $k_u$-partite for every $u \in V(G)$" to ``$G_u$ has a complete $k_u$-partite spanning subgraph", as Figure \ref{fig:Conter2} shows.

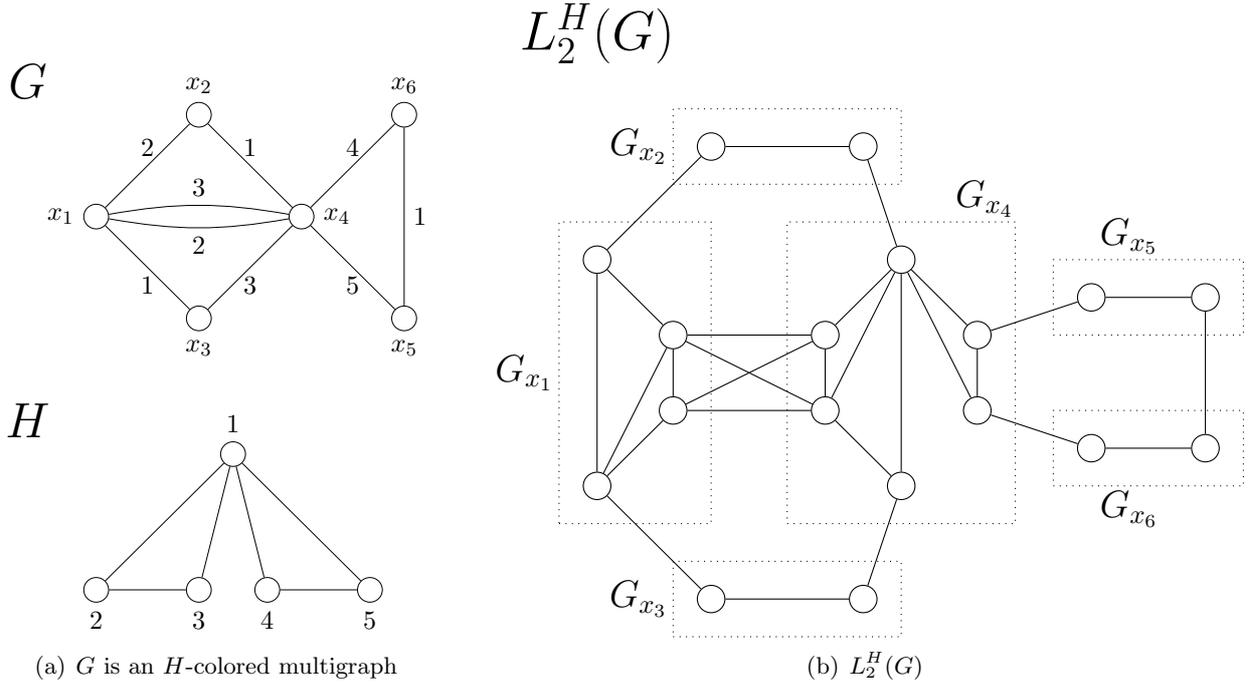
\begin{figure}[tpb]
\centering 
	\subfigure[$G$ is an $H$-colored multigraph]
	{\scalebox{0.9}{\begin{tikzpicture}
	\tikzset{every loop/.style={min distance=15mm,in=120,out=60,looseness=10}}
	\tikzstyle{vertex}=[circle, draw=black] 
		\node[](G) at (0,3.5){\huge{$G$}};		
		\node[vertex](v1) at (1,1.5) {};
		\node[left=0mm of v1](Et.u) {$x_1$};
		\node[vertex](v2) at (2.5,3) {};
		\node[above=0mm of v2](Et.u) {$x_2$};
		\node[vertex](v3) at (2.5,0) {};
		\node[below=0mm of v3](Et.u) {$x_3$};
		\node[vertex](v4) at (4,1.5) {};
		\node[right=0mm of v4](Et.u) {$x_4$};
		\node[vertex](v5) at (5.5,0) {};
		\node[below=0mm of v5](Et.u) {$x_5$};
		\node[vertex](v6) at (5.5,3) {};
		\node[above=0mm of v6](Et.u) {$x_6$};
		
		\draw[] (v1) -- node[above] {2} ++(v2);
		\draw[] (v1) -- node[below] {1} ++(v3);
		\draw[] (v2) -- node[above] {1} ++(v4);
		\draw[] (v3) -- node[below] {3} ++(v4);
		\draw[] (v4) -- node[below] {5} ++(v5);
		\draw[] (v4) -- node[above] {4} ++(v6);
		\draw[] (v5) -- node[right] {1} ++(v6);
		\path (v1) 	edge [above,bend left=10] node {3} (v4)
					edge [below,bend right=10] node {2} (v4);
		
		\node[](H) at (0,-1.5){\huge{$H$}};
		\node[vertex](v1) at (1,-4) {};
		\node[below=0mm of v1](Et.u) {2};
		\node[vertex](v2) at (2.5,-4) {};
		\node[below=0mm of v2](Et.u) {3};
		\node[vertex](v3) at (3.5,-4) {};
		\node[below=0mm of v3](Et.u) {4};
		\node[vertex](v4) at (5,-4) {};
		\node[below=0mm of v4](Et.u) {5};
		\node[vertex](v5) at (3,-2) {};
		\node[above=0mm of v5](Et.u) {1};
		
		\draw[] (v5) -- (v1);
		\draw[] (v5) -- (v2);
		\draw[] (v5) -- (v3);
		\draw[] (v5) -- (v4);
		\draw[] (v1) -- (v2);
		\draw[] (v3) -- (v4);
		
	\end{tikzpicture}}}
	\hfill
	\subfigure[$L_2^H(G)$]{\scalebox{1}{\begin{tikzpicture}
	\tikzset{every loop/.style={min distance=15mm,in=120,out=60,looseness=10}}
	\tikzstyle{vertex}=[circle, draw=black] 
		
		\node[](L2) at (0,7){\huge{$L_2^H(G)$}};
		\node[vertex](e1x1) at (0,4) {};
		\node[vertex](e1x2) at (1.5,5.5) {};
		\node[vertex](e2x1) at (1,3) {};
		\node[vertex](e2x4) at (3,3) {};
		\node[vertex](e3x1) at (1,2) {};
		\node[vertex](e3x4) at (3,2) {};
		\node[vertex](e4x1) at (0,1) {};
		\node[vertex](e4x3) at (1.5,-0.5) {};
		\node[vertex](e5x3) at (3.5,-0.5) {};
		\node[vertex](e5x4) at (4,1) {};
		\node[vertex](e6x2) at (3.5,5.5) {};
		\node[vertex](e6x4) at (4,4) {};
		\node[vertex](e7x4) at (5,3) {};
		\node[vertex](e7x5) at (6.5,3.5) {};
		\node[vertex](e8x4) at (5,2) {};
		\node[vertex](e8x6) at (6.5,1.5) {};
		\node[vertex](e9x5) at (8,3.5) {};
		\node[vertex](e9x6) at (8,1.5) {};

		\draw[] (e1x1) -- (e1x2);
		\draw[] (e2x1) -- (e2x4);
		\draw[] (e2x1) -- (e3x4);
		\draw[] (e2x4) -- (e3x1);
		\draw[] (e3x1) -- (e3x4);
		\draw[] (e4x1) -- (e4x3);
		\draw[] (e5x3) -- (e5x4);
		\draw[] (e6x2) -- (e6x4);
		\draw[] (e7x4) -- (e7x5);
		\draw[] (e8x4) -- (e8x6);
		\draw[] (e9x5) -- (e9x6);
		\draw[] (e1x1) -- (e2x1);
		\draw[] (e1x1) -- (e4x1);
		\draw[] (e2x1) -- (e3x1);
		\draw[] (e2x1) -- (e4x1);
		\draw[] (e3x1) -- (e4x1);
		\draw[] (e1x2) -- (e6x2);
		\draw[] (e4x3) -- (e5x3);
		\draw[] (e6x4) -- (e2x4);
		\draw[] (e6x4) -- (e3x4);
		\draw[] (e6x4) -- (e5x4);
		\draw[] (e6x4) -- (e7x4);
		\draw[] (e6x4) -- (e8x4);
		\draw[] (e2x4) -- (e3x4);
		\draw[] (e3x4) -- (e5x4);
		\draw[] (e7x4) -- (e8x4);
		\draw[] (e7x5) -- (e9x5);
		\draw[] (e8x6) -- (e9x6);
		
		\draw[dotted, thin] (-0.5,0.5) rectangle (1.5,4.5);
		\node[](G1) at (-0.3,2.5){};	
		\node[left=0mm of G1] {\Large{$G_{x_1}$}};
		\draw[dotted, thin] (1,5) rectangle (4,6);
		\node[](G2) at (1.2,5.5){};	
		\node[left=0mm of G2] {\Large{$G_{x_2}$}};
		\draw[dotted, thin] (1,-1) rectangle (4,0);
		\node[](G3) at (1.2,-0.5){};	
		\node[left=0mm of G3] {\Large{$G_{x_3}$}};
		\draw[dotted, thin] (2.5,0.5) rectangle (5.5,4.5);
		\node[](G4) at (5.1,4.3){};	
		\node[above=0mm of G4] {\Large{$G_{x_4}$}};
		\draw[dotted, thin] (6,3) rectangle (8.5,4);
		\node[](G5) at (7,3.8){};	
		\node[above=0mm of G5] {\Large{$G_{x_5}$}};
		\draw[dotted, thin] (6,1) rectangle (8.5,2);
		\node[](G6) at (7,1.2){};	
		\node[below=0mm of G6] {\Large{$G_{x_6}$}};
		
	\end{tikzpicture}}}
	\caption{$G_u$ is a complete $k_u$-partite graph, for $u \in V(G)\setminus \{x_4\}$, and $G_{x_4}$ has a bipartite spanning subgraph but there is no closed Euler dynamic $H$-trail in $G$}
\label{fig:Conter2}
\end{figure}

\begin{corollary}\label{cor:1}
Let $G$ be an $H$-colored multigraph. Then $G_x$ has a perfect matching, for every $x \in V(G)$, if and only if there exists a partition of $E(G)$ into closed $H$-trails. 
\end{corollary}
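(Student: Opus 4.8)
The plan is to read this off from Theorem~\ref{theo:2} once we understand which closed dynamic $H$-trails are actually $H$-trails on the matching side. Recall that the non-matching edges of $L_2^H(G)\setminus M_J$ come in two flavours: the edges of type (a), $f(u,e)f(u,g)$, which live inside a single $G_u$ and encode a genuine color transition $c(e)c(g)\in E(H)$ at the vertex $u$; and the edges of type (b), $f(u,e)f(v,g)$ with $u\neq v$, which join two distinct copies $G_u,G_v$ along parallel edges and encode a ``lane change''. The decisive observation is that a closed dynamic $H$-trail has all its blocks of length one (equivalently, it is a closed $H$-trail) precisely when the cycle $T(P)$ of Lemmas~\ref{lemma:H-trail to cycle} and~\ref{lemma:Cycle to H-trail} alternates $M_J$ with edges of type (a) only: a block of length $\geq 2$ forces a type (b) edge, and conversely every type (b) edge is produced by a lane change. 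Hence, under the bijection of Theorem~\ref{theo:Bijection}, partitions of $E(G)$ into closed $H$-trails should correspond exactly to perfect matchings $M$ of $L_2^H(G)\setminus M_J$ with $M\subseteq\bigcup_{x\in V(G)}E(G_x)$.

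With this in hand, I would argue both directions directly. Since $V(L_2^H(G))=\bigcup_{x\in V(G)}V(G_x)$ is a disjoint union, a matching contained in $\bigcup_x E(G_x)$ is perfect if and only if its restriction to each $G_x$ is a perfect matching of $G_x$. So, assuming each $G_x$ has a perfect matching $M_x$, the union $M=\bigcup_x M_x$ is a perfect matching of $L_2^H(G)$ using only type (a) edges; in particular $M\cap M_J=\emptyset$, so Theorem~\ref{theo:2} yields a partition of $E(G)$ into closed dynamic $H$-trails, and because $M$ has no type (b) edge the construction of Lemma~\ref{lemma:Cycle to H-trail} never takes the lane-change branch, so every trail is in fact a closed $H$-trail (each closing via condition (a), since the seam edge is of type (a)).

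Conversely, starting from a partition $\{P_1,\dots,P_k\}$ of $E(G)$ into closed $H$-trails, I would form $M=\bigcup_{i=1}^k E(T(P_i))\setminus M_J$ exactly as in the proof of Theorem~\ref{theo:2}; this is a perfect matching of $L_2^H(G)\setminus M_J$. As each $P_i$ has all blocks of length one, Lemma~\ref{lemma:H-trail to cycle} produces cycles whose non-$M_J$ edges all lie inside some $G_x$, so $M\subseteq\bigcup_x E(G_x)$. Restricting, $M\cap E(G_x)$ saturates $V(G_x)$ for every $x$, i.e.\ it is a perfect matching of each $G_x$, as required.

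The step I expect to be the crux is the first paragraph's equivalence between ``all blocks of length one'' and ``type (a) only'': one must check carefully that two consecutive parallel edges traversed as a genuine ``there and back'' (a length-one transition at the shared endpoint) give a type (a) edge, whereas the same pair traversed inside a single block gives a type (b) edge, so that the orientation recorded by the copies $f(\cdot,\cdot)$---and not the mere fact that the edges happen to be parallel---is what distinguishes an $H$-trail from a lane change. Once this is settled, the corollary is immediate from Theorem~\ref{theo:2}.
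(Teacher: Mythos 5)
Your proposal is correct and follows essentially the same route as the paper: both directions invoke Theorem~\ref{theo:2} with $M=\bigcup_{x\in V(G)}M_x$ (respectively $M=\bigcup_i E(T(P_i))\setminus M_J$) and rest on the observation that lane changes correspond exactly to the type~(b) edges, so a matching contained in $\bigcup_x E(G_x)$ yields trails with no lane changes and vice versa. Your explicit check that a genuine ``there and back'' along two parallel edges produces a type~(a) edge (because both copies sit in the same $G_y$) is exactly the detail the paper leaves implicit in the phrase ``there is no lane change,'' so it is a welcome clarification rather than a deviation.
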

\begin{proof}
Let $G$ be an $H$-colored multigraph and $M_J$ be the joint matching of $L_2^H(G)$. 

Suppose that $G_x$ has a perfect matching, for every $x \in V(G)$, say $M_x$. Let $M=\bigcup_{x\in V(G)} M_x$.

It follows from the definition of $M_J$ that $M \cap M_J =\emptyset$. Hence, $G$ has a partition of its edges into closed dynamic $H$-trails, by Theorem \ref{theo:2}.

Let $\mathcal{P}$ be the partition that is obtained as in the proof of Theorem \ref{theo:2}. It follows, from the construction of each dynamic $H$-trail in $\mathcal{P}$, that there is no lane change. So, $\mathcal{P}$ is a partition of $E(G)$ into closed $H$-trails.

Now, suppose that there exists a partition of $E(G)$ into closed $H$-trails, say $\mathcal{P}$. By Theorem \ref{theo:2}, there exists a perfect matching in $L_2^H(G)\setminus M_J$. Let $M$ be the perfect matching that is obtained as in the proof of Theorem \ref{theo:2}.

Let $f(x,e) \in V(L_2^H(G))$. Then, there exists $P \in \mathcal{P}$ such that $e \in E(P)$. Since $P$ is an $H$-trail and by the construction of $M$, it follows that $P=(u,g,x,e,v,\ldots)$ and $f(x,g)f(x,e) \in M$. Moreover, $f(x,g)f(x,e) \in M \cap E(G_x)$.

Therefore, $M \cap E(G_x)$ is a perfect matching of $G_x$.
\end{proof}

Notice that if $H$ is a complete graph without loops and $G$ is an $H$-colored multigraph. Then $P$ is a properly colored closed trail if and only if $P$ is an closed $H$-trail.

\begin{corollary}[Kotzig]
Let $G$ be an edge-colored eulerian multigraph. Then $G$ has a properly colored closed Euler trail if and only if $\delta_i(x) \leq \sum_{j\neq i} \delta_j (x)$, where $\delta_i(x)$ is the number of edges with color $i$ incident with $x$, for each vertex $x$ of $G$.
\end{corollary}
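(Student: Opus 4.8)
The plan is to specialize the whole apparatus to the case in which $H$ is the complete graph without loops whose vertices are the colors appearing on $E(G)$. With this choice a properly colored (closed) trail is exactly a (closed) $H$-trail, as observed just before the statement, so a properly colored closed Euler trail is precisely a closed Euler $H$-trail. Moreover, for every vertex $x$ the auxiliary graph $G_x$ becomes the complete multipartite graph whose independent sets are the color classes of the edges incident with $x$, the part corresponding to color $i$ having exactly $\delta_i(x)$ vertices. This translates the statement into the language of $G_x$ and $L_2^H(G)$, where Corollary \ref{cor:1} is available.

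The first substantive step is a matching lemma for complete multipartite graphs: a complete multipartite graph $K_{n_1,\dots,n_k}$ on an even number of vertices has a perfect matching if and only if $n_i \le \sum_{j\neq i} n_j$ for every $i$, i.e.\ no part exceeds half the vertices. Necessity is immediate, since every edge incident with the largest part leaves it. For sufficiency I would verify Tutte's condition $o(K_{n_1,\dots,n_k}\setminus S)\le |S|$ from Theorem \ref{theo:Tutte}: deleting $S$ leaves a complete multipartite graph, which is connected (hence has at most one odd component) unless all surviving vertices lie in a single part, and in that degenerate case the ``no part exceeds half'' hypothesis together with the even total yields $o\le|S|$ directly. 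Since $G$ is eulerian, every vertex has even degree, so $\deg(x)=\sum_i\delta_i(x)$ is even; applying the lemma to $G_x$ shows that $G_x$ has a perfect matching if and only if Kotzig's inequality $\delta_i(x)\le\sum_{j\neq i}\delta_j(x)$ holds at $x$.

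Both directions then follow quickly. If $G$ has a properly colored closed Euler trail $P$, then $P$ is a closed $H$-trail and $\{P\}$ is a partition of $E(G)$ into closed $H$-trails; by Corollary \ref{cor:1} every $G_x$ has a perfect matching, and by the lemma Kotzig's inequality holds at every vertex. Conversely, assuming Kotzig's inequality, the lemma furnishes a perfect matching in each $G_x$, and Corollary \ref{cor:1} produces a partition $\mathcal P=\{P_1,\dots,P_k\}$ of $E(G)$ into closed $H$-trails. It then remains only to glue these into one trail.

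The gluing is where the real work lies, and it reuses the merging argument from the converse of Theorem \ref{theo:5}. Since $G$ is connected, some $P_j$ shares a vertex $x_0$ with the union built so far; writing both closed $H$-trails so that they begin and end at $x_0$, the completeness of the multipartite graph $G_{x_0}$ guarantees that, after possibly reversing one of them, they can be concatenated at $x_0$ into a single closed $H$-trail. The point I would stress is that, because $H$ is complete and no edges are reordered internally, this concatenation never forces two consecutive edges to share a color and never introduces a ``lane change''; hence the merged object remains an ordinary closed $H$-trail, that is, a properly colored closed trail, rather than merely a dynamic one. Iterating until all of $\mathcal P$ is absorbed produces a properly colored closed Euler trail. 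The main obstacles to handle carefully are (i) proving the complete multipartite matching lemma cleanly through Tutte's theorem, and (ii) checking that Kotzig's inequality forces at least two colors at every vertex of positive degree, so that each $G_{x_0}$ is genuinely multipartite ($k_{x_0}\ge 2$) and the merging step is always available.
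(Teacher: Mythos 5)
Your proposal is correct and follows essentially the same route as the paper: specialize to $H$ complete without loops so that each $G_u$ is the complete multipartite graph with parts of sizes $\delta_i(u)$, characterize perfect matchings in $G_u$ via Tutte's theorem (Theorem \ref{theo:Tutte}), and combine Corollary \ref{cor:1} with the gluing argument of Theorem \ref{theo:5} to assemble the closed Euler $H$-trail. The only difference is one of care, not of method: you make explicit two points the paper's proof glosses over, namely that the merged trail stays a genuine $H$-trail (no lane changes arise) and that Kotzig's inequality forces $k_u\geq 2$ at every vertex of positive degree.
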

\begin{proof}
Let $G$ be an $H$-colored multigraph, where $H$ is a complete graph without loops and $\vert V(H)\vert =c$.

Let $G_u$, with $u \in V(G)$. It follows from the definition of $G_u$ and $H$ is a complete graph that $G_u$ is a complete $k_u$-paritite graph with independent sets $E_i^u=\{e \in E(G)\: : \: e=ux \textrm{ for some } x \in V(G) \textrm{ and } c(e)=i\}$, with $i \in \{1,\ldots,c\}$. Moreover, $\vert E_i^u\vert = \delta_i(u)$.

Suppose that $G$ has a closed Euler $H$-trail. By Corollary \ref{cor:1} $G_u$ has a perfect matching, for every $u \in V(G)$.

Hence, $\delta_i(u)=\vert E_i^u\vert = o(G \setminus S) \leq \vert S\vert = \sum_{j\neq i} \vert E_j^u\vert = \sum_{j\neq i} \delta_i(u)$, where $S= V(G_u)\setminus E_i^u$, by Theorem \ref{theo:Tutte}.

Suppose that $\vert E_i^u\vert =\delta_i(x) \leq \sum_{j\neq i} \delta_j (x)= \sum_{j\neq i} \vert E_j^u\vert$, for each vertex $x$ of $G$.

\textbf{Claim 1.} $G_u$ has a perfect matching.

Let $S$ be a proper subset of $V(G_u)$. Consider $G_u \setminus S$.

Case 1. $G_u \setminus S$ is connected.

Then $o(G_u \setminus S) \leq 1 \leq \vert S\vert$.\vspace{1em}

Case 2. $G_u \setminus S$ is disconnected.

Since $G_u$ is a complete $k_u$-partite graph, then $V(G_u \setminus S) \subseteq E_i^u$, for some $i \in \{1,\ldots,c\}$. Hence $o(G_u \setminus S) = \vert V(G_u \setminus S)\vert \leq \vert E_i^u\vert \leq \sum_{j\neq i} \vert E_j^u\vert \leq \vert S \vert$.

Therefore, it follows from Theorem \ref{theo:Tutte} that $G_u$ has a perfect matching.

Let $M=\bigcup_{u\in V(G)} M_u$, where $M_u$ is a perfect matching in $G_u$. So, $M$ is a perfect matching in $L_2^H(G)$ and $C_{xy}^M=E_{xy}$, for every $E_{xy} \neq \emptyset$.

It follows from Theorem \ref{theo:5} and Corollary \ref{cor:1} that $G$ has a closed Euler $H$-trail.
\end{proof}

\begin{corollary}[\cite{H-paseos}]
Let $H$ be a graph possibly with loops and $G$ be an $H$-colored multigraph without loops. Suppose that $G$ is Eulerian and $G_u$ is a complete $k_u$-partite graph, for every $u$ in $V(G)$ and for some $k_u$ in $\mathbb{N}$. Then $G$ has a closed Euler $H$-trail if and only if $\vert C_i^u \vert\leq \sum_{j \neq i} \vert C_j^u\vert$ for every $u$ in $V(G)$, where $\{C_1^u,...,C_{k_u}^u\}$ is the partition of $V(G_u)$ into independent sets.
\end{corollary}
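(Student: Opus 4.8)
The plan is to follow the same route used just above for Kotzig's corollary, replacing the color classes that there arose because $H$ was complete by the independent sets $C_1^u,\ldots,C_{k_u}^u$ of the complete multipartite graph $G_u$. The central observation is that, for a complete multipartite graph, Theorem \ref{theo:Tutte} turns the degree-type inequality $\vert C_i^u\vert \le \sum_{j\ne i}\vert C_j^u\vert$ into the single statement ``$G_u$ has a perfect matching''. Once this equivalence is in place, the passage between perfect matchings of the graphs $G_u$ and a closed Euler $H$-trail is handled by Corollary \ref{cor:1} (relating perfect matchings of the $G_u$ to partitions of $E(G)$ into closed $H$-trails) and Theorem \ref{theo:5} (gluing such a partition into a single trail). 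Note that since $G$ is Eulerian it is connected and all its degrees are even, and whenever $\deg_G(u)\ge 1$ the existence of a transition at $u$ forces $k_u\ge 2$, so the hypotheses of Theorem \ref{theo:5} are met.

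First I would prove the forward implication. If $G$ has a closed Euler $H$-trail $P$, then $\{P\}$ is a partition of $E(G)$ into closed $H$-trails, so Corollary \ref{cor:1} guarantees that $G_u$ has a perfect matching for every $u\in V(G)$. Fixing $u$ and an index $i$, I would apply Theorem \ref{theo:Tutte} with $S=V(G_u)\setminus C_i^u$: since $C_i^u$ is one of the parts of the complete multipartite graph $G_u$, it is independent, so $G_u\setminus S$ consists of the $\vert C_i^u\vert$ isolated vertices of $C_i^u$ and $o(G_u\setminus S)=\vert C_i^u\vert$. Tutte's inequality then reads $\vert C_i^u\vert \le \vert S\vert=\sum_{j\ne i}\vert C_j^u\vert$, which is exactly the desired condition.

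For the converse I would assume the inequalities hold and verify Tutte's condition for each $G_u$ in order to produce a perfect matching $M_u$. For a proper subset $S$ of $V(G_u)$ I would distinguish two cases. If $G_u\setminus S$ is disconnected, then, because $G_u$ is complete multipartite, all surviving vertices lie in a single part $C_i^u$; hence $o(G_u\setminus S)=\vert V(G_u\setminus S)\vert\le\vert C_i^u\vert\le\sum_{j\ne i}\vert C_j^u\vert\le\vert S\vert$, the last inequality holding because $S$ then contains every part other than $C_i^u$. If $G_u\setminus S$ is connected and $S\ne\emptyset$, then $o(G_u\setminus S)\le 1\le\vert S\vert$; the remaining case $S=\emptyset$ is where the hypothesis that $G$ is Eulerian enters, since then $\vert V(G_u)\vert=\deg_G(u)$ is even and, $G_u$ being connected for $k_u\ge 2$, $o(G_u)=0$. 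By Theorem \ref{theo:Tutte} each $G_u$ has a perfect matching $M_u$, and $M=\bigcup_{u\in V(G)}M_u$ is a perfect matching of $L_2^H(G)$ disjoint from the joint matching $M_J$, with $C_{xy}^M=E_{xy}$ for every $E_{xy}\ne\emptyset$. Condition (a) of Theorem \ref{theo:5} therefore holds, so $G$ has a closed Euler $H$-trail, exactly as in the proof of Kotzig's corollary above: because $M$ realizes $C_{xy}^M=E_{xy}$ the blocks obtained through Corollary \ref{cor:1} are genuine closed $H$-trails, and the reconnection in Theorem \ref{theo:5} is made through proper transitions in $H$, introducing no lane changes.

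I expect the main obstacle to be the converse direction's verification of Tutte's condition, and in particular the treatment of $S=\emptyset$: the inequality $\vert C_i^u\vert\le\sum_{j\ne i}\vert C_j^u\vert$ does not by itself force $\vert V(G_u)\vert$ to be even, so the Eulerian hypothesis is essential precisely there. A secondary point requiring care is to ensure that the trail produced is a bona fide closed Euler $H$-trail rather than merely a dynamic one; this is guaranteed by the fact that $M$ lies entirely inside the graphs $G_u$ (equivalently $C_{xy}^M=E_{xy}$), which rules out lane changes, together with the gluing step of Theorem \ref{theo:5}.
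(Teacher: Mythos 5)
Your proof is correct and is essentially the argument the paper intends: the paper states this corollary without a separate proof, as the direct analogue of its proof of Kotzig's corollary immediately above, and your route (Tutte's Theorem \ref{theo:Tutte} applied to the complete multipartite graphs $G_u$, then Corollary \ref{cor:1} together with the gluing step inside Theorem \ref{theo:5}) reproduces that template with the parts $C_i^u$ replacing the color classes $E_i^u$. One small repair: in the converse direction, $k_u \ge 2$ should be deduced from the inequality itself (if $k_u=1$ then $\vert C_1^u\vert \le \sum_{j\ne 1}\vert C_j^u\vert = 0$, impossible for a vertex of positive degree in a connected Eulerian graph) rather than from ``the existence of a transition at $u$'', which is not yet known at that point; with that adjustment, your handling of the case $S=\emptyset$ via the Eulerian parity hypothesis is in fact more careful than the paper's own proof of Kotzig's corollary, which overlooks that case.
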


\end{document}